\theoremstyle{plain}
\newtheorem{theorem}{Theorem}[section]
\newtheorem*{theorem*}{Theorem \ref{thm:appl}}
\newtheorem{proposition}[theorem]{Proposition}
\newtheorem{lemma}[theorem]{Lemma}
\theoremstyle{definition}
\newtheorem{remark}[theorem]{Remark}
\numberwithin{equation}{section}
\DeclareMathOperator{\trace}{trace}
\DeclareMathOperator{\grad}{grad}
\DeclareMathOperator{\Div}{div}
\DeclareMathOperator{\id}{Id}
\DeclareMathOperator{\Hess}{Hess}
\DeclareMathOperator{\Riem}{Riem}
\title[Rigidity results for biconservative hypersurfaces]{Rigidity results for compact biconservative hypersurfaces in space forms}
\author{\c{S}tefan Andronic and Aykut Kayhan}
\address{Faculty of Mathematics, Al. I. Cuza University of Iasi, Blvd. Carol I, no. 11, 700506 Iasi, Romania} \email{stefanandronic215@gmail.com}
\address{Permanent address: Mathematics and Science Education, Maltepe University, Istanbul, 34480, Turkey}
\address{Current address: Faculty of Mathematics, Al. I. Cuza University of Iasi, Blvd. Carol I, no. 11, 700506 Iasi, Romania} \email{aykutkayhan@maltepe.edu.tr}
\subjclass[2020]{Primary 53C42. Secondary 53C40.}
\keywords{biconservative hypersurfaces, parallel normalized mean curvature}
\thanks{The second author was supported by the Scientific and Technological Research Council of T\"{u}rkiye (T\"{U}B\.{I}TAK), under the 2219-International Post-Doctoral Research Fellowship Programme (grant no: 1059B192300478). The opinions and views expressed herein are those of the authors and do not reflect those of T\"{U}B\.{I}TAK}
\begin{document}
	\maketitle
	
	\begin{abstract}
		In this paper we present alternative proofs for two known rigidity results concerning non-negatively curved compact biconservative hypersurfaces in space forms. Further, we prove some new rigidity results by replacing the hypothesis of non-negative sectional curvature with some estimates of the squared norm of the shape operator.
	\end{abstract}
	
	\section{Introduction}
	
	The investigation of biconservative submanifolds has seen a significant advancement in recent years, evolving from the study of biharmonic submanifolds as researchers sought to relax the biharmonic equation. We just recall here that the biharmonic equation of isometric immersions $\varphi : M^m \to N^n$ is 
	$$
	\tau_2 (\varphi) = - \Delta^\varphi \tau (\varphi) - \trace R^N (d\varphi (\cdot), \tau (\varphi)) d \varphi (\cdot) = 0,
	$$
	where $\Delta^\varphi$ is the rough Laplacian acting on sections of the pull-back bundle $\varphi ^{-1} \left ( TN^n \right )$, $R^N$ is the curvature tensor field on $N^n$, $\tau (\varphi) = m H$ is the tension field associated to $\varphi$ and $H$ is the mean curvature vector field. The biharmonic equation decomposes into the normal and tangent parts. The biconservative submanifolds are characterized by
	\begin{equation} \label{eq:tau2tangentVanishing}
		(\tau_2 (\varphi))^\top = 0.
	\end{equation}
	For a geometrical meaning of this equation see \cite{CaddeoMontaldoOniciucPiu2014}, \cite{LoubeauMontaldoOniciuc2008} and for recent surveys on this topic see, for example, \cite{FetcuOniciuc2022} and \cite{FuZhan2023}. In addition to the significantly greater number of examples of biconservative submanifolds compared to biharmonic submanifolds, biconservative submanifolds also exhibit numerous intriguing geometric properties, particularly in the context of surfaces (see, for example, \cite{Nistor2017}). In the case of biconservative hypersurfaces in space forms, we recall that they are characterized by
	\begin{equation}\label{eq:BiconservativityCharacterization}
		A(\grad f) = - \frac m 2 f \grad f.
	\end{equation}
	and they satisfy the following inequality
	\begin{equation}\label{eq:JHChenInequality}
		|\nabla A|^2 \geq \frac {m^2 (m+26)} {4(m-1)} |\grad f|^2,
	\end{equation}
	where  $A$ denotes the shape operator and $f$ is the mean curvature function of $M^m$ (see \cite{BalmusMontaldoOniciuc2012} and \cite{JHChen1993}).
	
	Obviously, every hypersurface with constant mean curvature function (CMC) is also biconservative. For non-CMC biconservative hypersurfaces in space forms, most of the existing results pertain to local classifications (see, for example, \cite{HasanisVlachos1995}), and only a few results are known for compact non-CMC biconservative hypersurfaces, see \cite{MontaldoOniciucPampano2023} and \cite{MontaldoPampano2023}.
	
	In the literature of hypersurfaces, there are two well-known results that classify compact hypersurfaces in spaces forms. The first one is due to Nomizu and Smyth in \cite{NomizuSmyth1969} and it states
	\begin{theorem} \label{th:NomizuSmythTheorem}
		Let $\varphi : M^m \to N^{m+1} (c)$ be a compact hypersurface in a space form. If $M$ has non-negative sectional curvature, i.e. $\Riem ^M \geq 0$, and constant mean curvature, then $\nabla A = 0$ and $\varphi (M)$ is one of the following hypersurfaces
		\begin{enumerate}[label = \alph*)]
			\item the Euclidean hypersphere $\mathbb S^m (r)$ of radius $r > 0$, if $c \in \{-1, 0\}$, i.e. $N$ is either the hyperbolic space $\mathbb H^{m+1}$ or the Euclidean space $\mathbb R^{m+1}$;
			\item either the small hypersphere $\mathbb S^m (r)$, $r \in (0, 1)$, or the standard product $\mathbb S^{m_1} (r_1) \times \mathbb S^{m_2} (r_2)$, where $r_1^2 + r_2^2 = 1$, $m_1 + m_2 = m$.
		\end{enumerate}
		Moreover, in the cases a) and b), for $m_1 \geq 2$, $m_2 \geq 2$, the immersion $\varphi$ has to be an embedding.
	\end{theorem}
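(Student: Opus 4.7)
My plan is to run the classical Simons/Bochner integral argument to conclude $\nabla A=0$, and then to invoke the known classification of hypersurfaces with parallel second fundamental form in space forms, pruning the resulting local list of models by compactness. No biconservativity input is needed, since in the CMC case $\grad f=0$ and \eqref{eq:BiconservativityCharacterization} is trivially satisfied.

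For the Simons step, I would choose at each point a local orthonormal frame diagonalizing $A$, with eigenvalues $\lambda_1,\dots,\lambda_m$. The Gauss equation gives $K_{ij}=c+\lambda_i\lambda_j$ for the sectional curvature of the coordinate $2$-plane $e_i\wedge e_j$, and a Weitzenb\"ock-type computation based on the Codazzi equation yields
$$
\tfrac12\Delta|A|^2 \;=\; |\nabla A|^2 \;+\; \bigl\langle A,\Hess(\trace A)\bigr\rangle \;+\; \tfrac12\sum_{i,j}(\lambda_i-\lambda_j)^2 K_{ij}.
$$
The CMC hypothesis kills the Hessian term, while $\Riem^M\geq 0$ forces every $K_{ij}\geq 0$, so the last sum is pointwise nonnegative. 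Integrating over the closed manifold and using $\int_M\Delta|A|^2=0$ then gives
$$
0 \;=\; \int_M|\nabla A|^2 \;+\; \tfrac12\int_M\sum_{i,j}(\lambda_i-\lambda_j)^2 K_{ij},
$$
and both summands being nonnegative forces $\nabla A\equiv 0$ on $M$.

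With a parallel shape operator in hand, I would appeal to the classical classification of hypersurfaces in space forms whose second fundamental form is parallel: each such hypersurface is locally either totally umbilical or, in the spherical ambient, a standard Riemannian product of two round spheres. Compactness of $M$ then rules out the noncompact local models. In $\mathbb{R}^{m+1}$ and $\mathbb{H}^{m+1}$ only a round sphere $\mathbb{S}^m(r)$ survives, since totally umbilical hyperplanes, horospheres and equidistant hypersurfaces are noncompact. In $\mathbb{S}^{m+1}$ both a small hypersphere $\mathbb{S}^m(r)$ with $r\in(0,1)$ and a Clifford product $\mathbb{S}^{m_1}(r_1)\times\mathbb{S}^{m_2}(r_2)$ with $r_1^2+r_2^2=1$ and $m_1+m_2=m$ are possible. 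The embedding statement when $m_1,m_2\geq 2$ then follows from a standard covering-space argument, both factors being simply connected.

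The most delicate step is the pointwise Simons identity itself: one must verify the exact form of the curvature term and its pointwise sign under $\Riem^M\geq 0$. Once that is in place, the integration is an immediate consequence of compactness, and the rigidity conclusion reduces to the classification of parallel-$A$ hypersurfaces, which is a classical result.
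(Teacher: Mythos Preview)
The paper does not supply its own proof of this theorem; it is quoted as a classical result of Nomizu and Smyth \cite{NomizuSmyth1969} and serves only as background for the biconservative generalizations that follow. So there is no in-paper argument to compare your proposal against.

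That said, your sketch is essentially the classical Nomizu--Smyth argument and is correct. The Cheng--Yau identity (which in the paper's sign convention is \eqref{eq:GeneralEquationYau}) plus CMC kills the Hessian term, nonnegative sectional curvature makes the curvature sum have a sign, and integration over the closed $M$ forces $\nabla A=0$; the classification of parallel-$A$ hypersurfaces then yields the list after discarding noncompact models. Two minor remarks: your displayed identity uses the analyst's Laplacian while the paper uses the geometer's convention $\Delta=-\trace\nabla^2$, so the signs in \eqref{eq:GeneralEquationYau} are flipped relative to yours, though this is immaterial since $\int_M\Delta|A|^2=0$ either way; and your local description of parallel-$A$ hypersurfaces in $\mathbb{R}^{m+1}$ and $\mathbb{H}^{m+1}$ is slightly understated (there are noncompact product models such as cylinders, not only umbilical ones), but you correctly eliminate them by compactness, so the conclusion is unaffected.
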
 
	
	The second one was given by Cheng and Yau in \cite{ChengYau1977} and basically leads to the same conclusion, but the hypothesis of having constant mean curvature is replaced by constant normalized scalar curvature greater or equal to $c$. 
	
	The biconservative hypersurfaces represent a natural notion in the geometry of submanifolds because they allow the generalization of these two results. Indeed, if one replaces the hypothesis of CMC with biconservativity and $m \leq 10$ in Theorem \ref{th:NomizuSmythTheorem}, then the following result is obtained.  
	\begin{theorem}[\cite{FetcuOniciuc2022}] \label{th:FetcuOniciuc2022}
		Let $\varphi : M^m \to N^{m+1} (c)$ be a compact non-minimal biconservative hypersurface. If $\Riem^M \geq 0$ and $m \leq 10$, then $\nabla A = 0$ and $\varphi (M)$ is one of the following hypersurfaces
		\begin{enumerate}[label = \alph*)]
			\item the Euclidean hypersphere $\mathbb S^m (r)$ of radius $r > 0$, if $c \in \{-1, 0\}$, i.e. $N$ is either the hyperbolic space $\mathbb H^{m+1}$ or the Euclidean space $\mathbb R^{m+1}$;
			\item either the small hypersphere $\mathbb S^m (r)$, $r \in (0, 1)$, or the standard product $\mathbb S^{m_1} (r_1) \times \mathbb S^{m_2} (r_2)$, where $r_1^2 + r_2^2 = 1$, $m_1 + m_2 = m$ and $r_1 \neq \sqrt {m_1 / m}$, if $c = 1$, i.e. $N$ is the unit Euclidean sphere $\mathbb S^{m+1}$.
		\end{enumerate}
	\end{theorem}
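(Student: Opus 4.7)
The plan is to show that, under the hypotheses, the biconservativity condition together with $\Riem^M\geq 0$ and $m\leq 10$ forces $\grad f\equiv 0$; after this reduction to the CMC case the classification follows immediately from Nomizu--Smyth (Theorem~\ref{th:NomizuSmythTheorem}), and the Clifford torus $\mathbb{S}^{m_1}(\sqrt{m_1/m})\times\mathbb{S}^{m_2}(\sqrt{m_2/m})$ is ruled out by the non-minimality assumption, since it is minimal in $\mathbb{S}^{m+1}$.

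First I would write down a Simons-type identity for $|A|^2$ in the form
\[
\tfrac{1}{2}\Delta |A|^2 \;=\; |\nabla A|^2 + \langle A,\Hess(mf)\rangle + Q(A,c) + R(A,\Riem^M),
\]
where $Q(A,c)$ gathers the contribution of the curvature of $N^{m+1}(c)$ and $R(A,\Riem^M)$ the intrinsic curvature contribution. By the standard Nomizu--Smyth computation (diagonalizing the shape operator and writing the intrinsic term as a sum of products of eigenvalue-differences with sectional curvatures), the term $R(A,\Riem^M)$ is non-negative whenever $\Riem^M\geq 0$. The biconservativity condition \eqref{eq:BiconservativityCharacterization} will be used to rewrite $\langle A,\Hess(mf)\rangle$ in a frame that diagonalizes $A$, using that $\grad f$ is an eigenvector of $A$ with eigenvalue $-mf/2$.

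Next I would integrate over the compact hypersurface $M^m$. The left-hand side vanishes, and an integration by parts combined with \eqref{eq:BiconservativityCharacterization} converts the $\Hess(mf)$ coupling into an expression controlled by $\int_M |\grad f|^2$. Discarding the non-negative curvature contributions then yields an upper bound of the shape
\[
\int_M |\nabla A|^2\,dv \;\leq\; C(m)\int_M |\grad f|^2\,dv,
\]
while the Chen-type bound \eqref{eq:JHChenInequality} provides the matching lower bound $\int_M |\nabla A|^2\,dv \geq \frac{m^2(m+26)}{4(m-1)}\int_M|\grad f|^2\,dv$. The dimensional hypothesis $m\leq 10$ is precisely what makes $C(m)\leq \frac{m^2(m+26)}{4(m-1)}$, so equality must hold throughout. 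Equality in \eqref{eq:JHChenInequality} forces $\grad f=0$, hence $f$ is constant on $M$.

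Once $f$ is constant, Theorem~\ref{th:NomizuSmythTheorem} applies and gives the listed models, with the non-minimality hypothesis discarding $f\equiv 0$ (and therefore the minimal Clifford torus in case b)). The main obstacle I anticipate is the second step: producing the Simons-type upper bound with a constant $C(m)$ sharp enough to be absorbed by the coefficient in \eqref{eq:JHChenInequality} exactly at $m=10$. This requires handling the $\langle A,\Hess(mf)\rangle$ term carefully — not merely pointwise, but after integration by parts using \eqref{eq:BiconservativityCharacterization} — so that every appearance of $\Hess f$ is converted into a quadratic expression in $\grad f$ and $f$ modulo terms that vanish on the compact manifold, with no loss of precision in the coefficient.
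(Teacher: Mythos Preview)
Your strategy is close to the paper's, but the final step contains a gap. After integrating \eqref{eq:GeneralFormulaSubmanifolds} over the compact $M$ (the divergence term drops) you get
\[
\int_M |\nabla A|^2 \, v_g \;=\; m^2 \int_M |\grad f|^2\, v_g \;-\; \frac{1}{2}\int_M \sum_{i,j}(\lambda_i-\lambda_j)^2 R_{ijij}\, v_g \;\leq\; m^2 \int_M |\grad f|^2\, v_g,
\]
so your constant is $C(m)=m^2$. Combined with \eqref{eq:JHChenInequality} this yields $\frac{m+26}{4(m-1)}\int_M|\grad f|^2 \leq \int_M|\grad f|^2$. For $m\leq 9$ the coefficient on the left strictly exceeds $1$ and your argument concludes immediately. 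But at the boundary dimension $m=10$ one has $\frac{m+26}{4(m-1)}=1$, and you are left only with pointwise equality in \eqref{eq:JHChenInequality} together with $\sum_{i,j}(\lambda_i-\lambda_j)^2 R_{ijij}\equiv 0$. Your assertion that ``equality in \eqref{eq:JHChenInequality} forces $\grad f=0$'' is not justified: the equality case of that Kato-type bound merely pins down the components of $\nabla A$ in a principal frame; it does not by itself annihilate $\grad f$. Indeed, as the paper recalls, the \emph{original} proof in \cite{FetcuOniciuc2022} starts from exactly this equality information and still needs a further, laborious computation with $\nabla^2 A$ and \eqref{eq:TAZero} to reach $f=\mathrm{const}$.

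The paper avoids this difficulty by working pointwise rather than in $L^2$. Using biconservativity to rewrite the divergence term in \eqref{eq:GeneralFormulaSubmanifolds} and then invoking \eqref{eq:JHChenInequality}, one arrives at the differential inequality \eqref{eq:MainInequality}, whose right-hand side is non-positive when $m\leq 10$ and $\Riem^M\geq 0$. Compactness then forces $|A|^2+\tfrac{m^2}{2}f^2$ to be \emph{constant} on $M$. This extra pointwise datum, together with $(\lambda_i-\lambda_j)^2 R_{ijij}=0$ everywhere, is what the two alternative proofs exploit---via the Gauss relation $(\lambda_i-\lambda_j)(c+\lambda_i\lambda_j)=0$ in the first, via the connection-form expression for $R_{1a1a}$ in the second---to force $\grad f=0$ even at $m=10$. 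Your integrated approach discards precisely this constancy, so to complete the argument you must either recover it (i.e.\ pass to \eqref{eq:MainInequality} before integrating) or supply an independent analysis of the equality case of \eqref{eq:JHChenInequality}.
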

	
	If the hypothesis of normalized scalar curvature greater or equal to $c$ is replaced by the biconservativity in the theorem from \cite{ChengYau1977}, then it is obtained
	\begin{theorem} [\cite{FetcuLoubeauOniciuc2021}] \label{th:FetcuLoubeauOniciucConstantScalarCurvature}
		Let $\varphi : M^m \to N^{m+1} (c)$ be a compact biconservative hypersurface in a space form $N^{m+1} (c)$. If $M$ is not minimal, has constant normalized scalar curvature, and $\Riem ^M \geq 0$, them $M$ is 
		\begin{enumerate} [label = \alph*)]
			\item a hypersphere $\mathbb S^m (r)$, $r > 0$, if $c \in \{-1, 0\}$, i.e. $N$ is either the hyperbolic space $\mathbb H^{m+1}$, or the Euclidean space $\mathbb E^{m+1}$,
			\item either a small hypersphere $\mathbb S^m (r)$, $r \in (0, 1)$, or the standard product $\mathbb S^{m_1} (r_1) \times \mathbb S^{m_2} (r_2)$, where $r_1^2 + r_2^2 = 1$, $m_1 + m_2 = m$, and $r_1 \neq \sqrt {m_1 / m}$, if $c = 1$, i.e. $N$ is the Euclidean sphere $\mathbb S^{m+1}$.
		\end{enumerate}
	\end{theorem}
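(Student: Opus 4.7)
The strategy is to reduce the theorem to Theorem \ref{th:NomizuSmythTheorem} by showing that the mean curvature function $f$ must be constant; the non-minimality of $\varphi$ then excludes the minimal Clifford torus ($r_1 = \sqrt{m_1/m}$) in case b), yielding exactly the stated classification.

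The first step is to translate the constant normalized scalar curvature hypothesis, via the Gauss equation $\mathrm{Scal}^M = m(m-1)c + m^2 f^2 - |A|^2$, into the pointwise algebraic identity
\begin{equation*}
|A|^2 = m^2 f^2 - K, \qquad K := m(m-1)(\rho - c) \in \mathbb{R}.
\end{equation*}
Next I argue by contradiction: suppose $f$ is not constant and let $U := \{p \in M : (\grad f)(p) \neq 0\}$, a non-empty open set. By \eqref{eq:BiconservativityCharacterization}, on $U$ the vector $\grad f$ is a principal direction of $A$ with principal curvature $-mf/2$. Inserting this into the formula $\mathrm{Ric}^M(X,X) = (m-1)c\,|X|^2 + (\trace A)\langle AX,X\rangle - |AX|^2$ gives, on $U$,
\begin{equation*}
\mathrm{Ric}^M(\grad f, \grad f) = \Big((m-1)c - \tfrac{3m^2 f^2}{4}\Big)|\grad f|^2.
\end{equation*}
Since $\Riem^M \geq 0$ implies $\mathrm{Ric}^M \geq 0$, this gives the pointwise bound $(m-1)c \geq \tfrac{3m^2 f^2}{4}$ on $U$.

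The cases $c \in \{-1,0\}$ follow immediately: for $c = -1$ the bound is absurd, and for $c = 0$ it forces $f \equiv 0$ on $U$, contradicting $\grad f \neq 0$ on $U$. Hence $U = \emptyset$, so $f$ is constant, and Theorem \ref{th:NomizuSmythTheorem} concludes the proof.

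The case $c = 1$ is the main obstacle, since the Ricci bound only gives $f^2 \leq \tfrac{4(m-1)}{3m^2}$ on $U$. To close it, I would use $\Riem^M \geq 0$ in full by way of the Gauss equation, which yields $\lambda_i \lambda_j \geq -1$ for every pair of distinct principal curvatures; combined with $\lambda_1 = -mf/2$ and the CSC identity $|A|^2 = m^2 f^2 - K$, this constrains the remaining $m-1$ principal curvatures (their sum is $3mf/2$ and their squared sum is $3m^2 f^2/4 - K$). I would then feed these constraints into the Simons-type integral identity for $|A|^2$ and use \eqref{eq:JHChenInequality} to control $\int_M |\nabla A|^2$, aiming to force $\int_M |\grad f|^2 = 0$. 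The delicate point I expect to be the real technical hurdle is absorbing the nonlinear terms $\int_M (m^2 f^2 - K)^2$ and $\int_M mf\,\trace(A^3)$ arising in Simons' formula so that the resulting coefficient of $\int_M |\grad f|^2$ has the correct sign without the dimensional restriction $m \leq 10$ that appears in Theorem \ref{th:FetcuOniciuc2022}.
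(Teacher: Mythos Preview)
Your argument for $c \leq 0$ is correct and is in fact the content of Proposition~\ref{th:weakerRigidityResult} in the paper (which does not even need compactness or constant scalar curvature). However, the $c = 1$ case is not a proof but a sketch with an acknowledged gap: you propose to feed the constraints $\sum_{a \geq 2} \lambda_a = 3mf/2$, $\sum_{a \geq 2} \lambda_a^2 = 3m^2 f^2/4 - K$, and $\lambda_1 \lambda_a \geq -1$ into a Simons-type integral and hope the sign of the $|\grad f|^2$ coefficient comes out right. There is no indication of how you would control the cubic term $mf\,\trace(A^3)$ or why the dimensional obstruction $m \leq 10$ from \eqref{eq:JHChenInequality} would disappear; indeed, the whole point of the theorem is that constant normalized scalar curvature \emph{replaces} the hypothesis $m \leq 10$, and your outline does not exploit this constancy beyond the algebraic identity $|A|^2 = m^2 f^2 - K$.

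The paper's approach supplies exactly the missing idea: use the Cheng-Yau operator $\square \alpha = \langle f^2 A, \Hess \alpha \rangle$ associated to the tensor $T = f^2 A$, which is divergence-free precisely by biconservativity (Lemma~\ref{th:ChengYauOperatorSelfAdjoint}). Self-adjointness of $\square$ on a compact manifold, together with the Cheng-Yau formula \eqref{eq:GeneralEquationYau} applied to $S = A$, yields the integral identity \eqref{eq:formulaChengYauOperatorf2A}. The constant scalar curvature hypothesis then converts $f^2 \Delta |A|^2$ into $m^2 f^2 \Delta f^2 = \frac{m^2}{2} \Delta f^4 + 4 m^2 f^2 |\grad f|^2$, and after integration the total divergence drops out to leave
\[
0 = \int_M f^2 \left( 2m^2 |\grad f|^2 + |\nabla A|^2 + \tfrac{1}{2} \sum_{i,j} (\lambda_i - \lambda_j)^2 R_{ijij} \right) v_g,
\]
where every term is non-negative by $\Riem^M \geq 0$. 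This forces $f^2 |\grad f|^2 = 0$, hence $\grad f = 0$, uniformly in $c$ and with no restriction on $m$. The weighting by $f^2$ coming from the choice $T = f^2 A$ is what makes the argument close; a bare Simons formula as you propose does not produce this weight.
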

	
	We mention that in the this paper we do not assume that the immersion $\varphi$ is an embedding. The reason is that for $c = 0$ or $c = -1$ it is known that item a) in Theorem \ref{th:FetcuLoubeauOniciucConstantScalarCurvature} holds for any embedding $\varphi : M^m \to \mathbb R^{m+1}$ with $M$ compact and having constant normalized scalar curvature (see \cite{MontielRos1991} and \cite{Ros1988}).
	
	In our paper, we first recall some basic facts about the geometry of hypersurfaces in space forms and of biconservative hypersurfaces. We also give a somehow weaker rigidity result than item a) from Theorem \ref{th:FetcuOniciuc2022} or Theorem \ref{th:FetcuLoubeauOniciucConstantScalarCurvature}, but with more relaxed hypothesis (see Proposition \ref{th:weakerRigidityResult}).
	
	In Section \ref{sec:mLess10}, we present two new proofs for Theorem \ref{th:FetcuOniciuc2022} that are shorter and simpler than the original one. The first one is based on the Gauss equation and the fact that $(\lambda_i - \lambda_j)^2 R_{ijij} = 0$, where $\lambda_i$'s are the principal curvatures of the shape operator. The second proof exploits the facts that $|A|^2 + m^2 f^2 / 2$ is constant, $R_{1a1a} = 0$ and the properties of the connection forms. Additionally, the second proof can be generalized to prove a similar known result in the case of biconservative submanifolds with parallel normalized mean curvature vector field (PNMC). Inspired by \cite{Li1996} and \cite{Li1997}, we also extend Theorem \ref{th:FetcuOniciuc2022} by replacing the hypothesis $\Riem ^M \geq 0$ with some estimates of $|A|^2$ in terms of the mean curvature function or the normalized scalar curvature (see Theorem \ref{th:ClassificationHypersurfacesInequalityMeanCurvature} and Theorem \ref{th:ClassificationHypersurfacesInequalityScalarCurvature}). 
	
	In the last section, we give an alternative proof for Theorem \ref{th:FetcuLoubeauOniciucConstantScalarCurvature}. Further, as in the previous section, we replace the hypothesis of $\Riem ^M \geq 0$ with the same estimates of $|A|^2$ in terms of the normalized scalar curvature and obtain another classification result for compact biconservative hypersurfaces (see Theorem \ref{th:ClassificationHypersurfacesInequalityScalarCurvature2}).
	
	\textbf{Conventions.}
	In this paper, we assume that all manifolds are connected, oriented and with dimension at least $2$. In general, the metric on an arbitrary manifold is denoted by $\langle \cdot, \cdot \rangle$ or not explicitly mentioned. The Levi-Civita connection of the Riemannian manifold $M$ is denoted by $\nabla$.
	 
	The rough Laplacian acting on the set of all sections in the pull-back bundle $\varphi^{-1} (TN)$ is given by 
	$$
	\Delta^\varphi = - \trace \left ( \nabla ^\varphi \nabla ^\varphi - \nabla ^\varphi _\nabla \right ),
	$$
	where $\nabla ^\varphi$ is the connection in the pull-back bundle, and the curvature tensor field is 
	$$
	R (X, Y) Z = \left [ \nabla _X, \nabla _Y \right ] Z - \nabla _{[X, Y]} Z.
	$$
	The mean curvature function of a hypersurface $\varphi : M^m \to N^{m+1}$ is denoted by $f = (\trace A) / m$, where $A = A_\eta$ is the shape operator of $M$ and $\eta$ is a unit section in the normal bundle.
	
	If it is not explicitly mentioned otherwise, we use the Einstein summation convention.
	
	\section{Preliminaries}
	
	First, we present some general results and formulas for hypersurfaces in space forms as well as the Cheng-Yau operator. We recall the Gauss and Codazzi equations for a hypersurface $M^m$ in a space form $N^{m+1} (c)$. 
	\begin{itemize}
		\item The Gauss Equation is
		\begin{equation}\label{GaussEquation}
			R(X, Y) Z = c \left ( \left \langle Y, Z \right \rangle X - \left \langle X, Z \right \rangle Y \right ) + \left \langle A(Y), Z \right \rangle A(X) - \left \langle A(X), Z \right \rangle A(Y)
		\end{equation}
		for any $X, Y, Z \in C(TM)$.
		
		\item The Codazzi Equation is
		\begin{equation} \label{CodazziEquation}
			\left ( \nabla _X A\right ) (Y) = \left ( \nabla _Y A \right ) (X)
		\end{equation}
		for any $X, Y \in C(TM)$.
	\end{itemize}
	
	The scalar curvature of $M$ is given by 
	\begin{equation} \label{eq:formulaScalarCurvature}
		s = m (m - 1) c + m^2 f^2 - |A|^2
	\end{equation}	
	and one can define the normalized scalar curvature of $M^m$ as 
	$$
	R = \frac s {m (m - 1)}.
	$$ 
	To simplify the computations, we denote $\overline R = R - c$. Thus, we have
	\begin{equation} \label{eq:formulaNormalizedScalarCurvature}
		m (m - 1) \overline R = m^2 f^2 - |A|^2.
	\end{equation}
	
	Now we recall some well-known properties of the shape operator of hypersurfaces in space forms.
	
	\begin{lemma}\label{th:PropertiesNablaA}
		Let $\varphi : M^m \to  N^{m+1}(c)$ be hypersurface in a space form. Then
		\begin{enumerate}[label = \alph*)]
			\item $(\nabla A) (\cdot, \cdot)$ is symmetric,
			\item $\left \langle (\nabla A) (\cdot, \cdot), (\cdot) \right \rangle$ is totally symmetric,
			\item $\trace (\nabla A) (\cdot, \cdot) = m \grad f$.
		\end{enumerate}
	\end{lemma}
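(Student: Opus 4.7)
The proof plan is to derive the three properties one at a time, using only the Codazzi equation \eqref{CodazziEquation}, the self-adjointness of the shape operator, and the definition $f = (\trace A)/m$. Nothing beyond standard index-free Riemannian manipulation should be required.

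Part a) is immediate: by definition $(\nabla A)(X, Y) = (\nabla_X A)(Y)$, and the Codazzi equation \eqref{CodazziEquation} states precisely that $(\nabla_X A)(Y) = (\nabla_Y A)(X)$, which is the desired symmetry in the first two arguments. For part b), I would start from
\[
(\nabla_X A)(Y) = \nabla_X(A(Y)) - A(\nabla_X Y),
\]
take the inner product with $Z$, and use that $A$ is self-adjoint to move $A$ onto $Z$ in each of the two terms; this yields $\langle (\nabla_X A)(Y), Z \rangle = \langle Y, (\nabla_X A)(Z) \rangle$, i.e.\ symmetry in the last two slots. Combining this with part a) gives total symmetry.

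For part c), I would fix a point $p \in M$ and a local orthonormal frame $\{e_i\}$. Then for any $X \in T_p M$,
\[
\langle \trace (\nabla A)(\cdot, \cdot), X \rangle = \sum_i \langle (\nabla_{e_i} A)(e_i), X \rangle = \sum_i \langle (\nabla_X A)(e_i), e_i \rangle,
\]
where the second equality uses the total symmetry from b) (equivalently, one application of Codazzi followed by one application of self-adjointness). The right-hand side equals $\trace(\nabla_X A) = X(\trace A) = m\, X(f) = m \langle \grad f, X \rangle$, and since $X$ was arbitrary, we conclude $\trace (\nabla A)(\cdot, \cdot) = m \grad f$.

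There is no genuine obstacle here; the statement is a folklore lemma packaged together for later reference. The only mild subtlety worth writing carefully is the swap in part b), to make clear that self-adjointness of $A$ on the pointwise level transfers to an identity for $\nabla A$ because the Levi-Civita connection preserves the metric (so the connection terms pair correctly).
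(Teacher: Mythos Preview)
Your proof is correct and entirely standard. Note, however, that the paper does not actually prove Lemma~\ref{th:PropertiesNablaA}: it is stated there as a recall of ``well-known properties of the shape operator of hypersurfaces in space forms'' with no accompanying argument, so there is nothing to compare against. Your write-up would serve perfectly well as the missing justification; the only cosmetic point is that in part~c) you may wish to spell out that $\trace(\nabla_X A) = X(\trace A)$ holds because the Levi-Civita connection is metric (so contraction commutes with covariant differentiation), which you allude to in your closing remark about part~b).
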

	
	Let $\varphi : M^m \to N^{m+1} (c)$ be a hypersurface in a space form. We denote by $\{ \lambda_i \}_{i \in \overline {1, m}}$ the principal curvatures of $M$, that is the eigenvalue functions of the shape operator. In general, these functions are continuous on $M$, but not necessarily smooth. Consider now the subset $M_A$ of all points of $M$ at which the number of distinct principal curvatures is locally constant. It is known that $M_A$ is open and dense in $M$. On a connected component of $M_A$, which is an open subset of $M$, the principal curvatures are smooth functions and there exists a local orthonormal frame field $\{E_i\}_{i \in \overline {1, m}}$ such that $A(E_i) = \lambda_i E_i$, for any $i \in \overline {1, m}$. When we consider the distinct principal curvatures of $A$, we denote their multiplicities by $m_1, \ldots, m_\ell$, where $\ell$ is the number of distinct principal curvatures.
	
	The connection $1$-forms $\omega^k _j$ corresponding to $\nabla$ are defined by $\nabla _{E_i} E_j = \omega^k _j (E_i) E_k$.
		
	In the pursuit of understanding the properties of biconservative hypersurfaces, we refer to the following formula which holds for any hypersurface in a space form $\varphi : M^m \to N^{m+1} (c)$
	\begin{equation}\label{eq:GeneralFormulaSubmanifolds}
		\frac 1 2 \Delta |A|^2 = - |\nabla A|^2 - m \Div (A(\grad f)) + m^2 |\grad f|^2 - \frac 1 2 \sum _{i, j = 1} ^m (\lambda_i - \lambda_j)^2 R_{ijij},
	\end{equation}
	see, for example, \cite{FetcuOniciuc2022}. This formula can be proved either by directly computing $\Delta |A|^2$, or as an application of the Cheng-Yau formula
	\begin{equation} \label{eq:GeneralEquationYau}
		\frac 1 2 \Delta |S|^2 = - |\nabla S|^2 - \langle S, \Hess (\trace S) \rangle - \frac 1 2 \sum _{i, j = 1} ^m (\mu_i - \mu_j)^2 R_{ijij},
	\end{equation}
	where $S$ is a symmetric $(1, 1)$ tensor field on an arbitrary manifold $M$ which satisfies the "Codazzi equation", i.e. $(\nabla S) (X, Y) = (\nabla S) (Y, X)$, and $\mu_i$'s are the eigenvalues of $S$, see \cite{ChengYau1977}.
	
	Another valuable tool in achieving our goals on biconservative hypersurfaces is the Cheng-Yau operator $\square$ associated to a symmetric $(1, 1)$ tensor field $T$, introduced in \cite{ChengYau1977}. For any $\alpha \in C^\infty (M)$, $\square \alpha$ is defined by
	\begin{equation*}
		\square \alpha = \langle T, \Hess \alpha \rangle.
	\end{equation*} 
	It was proved that if $T$ is a divergence-free tensor field defined on a compact manifold, then $\square$ is self-adjoint with respect to the $L^2$ inner product, i.e.
	\begin{equation*}
		\int _M \alpha (\square \beta) \ v_g = \int _M \beta (\square \alpha) \ v_g.
	\end{equation*}
	A direct consequence is that on compact manifolds 
	\begin{equation} \label{eq:consequenceChengYauSelfAdjoint}
		\int _M \square \alpha \ v_g = 0,
	\end{equation}
	for any divergence-free $(1, 1)$ tensor $T$.
	
	Now, we recall a few things about the stress-bienergy tensor $S_2$. Let $\phi : M^m \to \left ( N^n, h \right )$ be a smooth map, where $h$ is a Riemannian metric on $N$. Assume that $M$ is compact and on the set of all Riemannian metrics $g$ defined on $M$, one can define a new functional 
	\begin{equation*}
		\tilde E_2 (g) = \frac 1 2 \int _M h( \tau (\phi), \tau (\phi) )\ v_g,
	\end{equation*}
	where $\tau (\phi) = \trace_g \nabla d\phi$. The critical points of this functional are characterized by the vanishing of the stress-bienergy tensor $S_2$, see \cite{LoubeauMontaldoOniciuc2008}, where
	\begin{equation*}
		S_2 (X, Y) = \frac 12 |\tau (\phi)|^2 \langle X, Y \rangle + \langle d \phi, \nabla \tau (\phi) \rangle \langle X, Y \rangle - \langle d\phi (X), \nabla _Y \tau (\phi) \rangle - \langle d\phi (Y), \nabla _X \tau (\phi) \rangle,
	\end{equation*}
	for any $X, Y \in C(TM)$. The tensor field $S_2$ satisfies
	\begin{equation*}
		\Div S_2 = - \langle \tau_2 (\phi), d\phi \rangle,
	\end{equation*}
	see \cite{Jiang1986-2}.
	
	The biconservative submanifolds are defined by $\Div S_2 = 0$, which is equivalent to \eqref{eq:tau2tangentVanishing}. In the case of hypersurfaces, the stress-bienergy tensor is given by
	\begin{equation*}
		S_2 = - \frac {m^2} 2 f^2 \id + 2mf A.
	\end{equation*}
	
	The next lemma gives us another characterization for biconservative hypersurfaces.
	
	\begin{lemma} \label{th:ChengYauOperatorSelfAdjoint}
		Let $M^m$ be a hypersurface in a space form $N^{m+1} (c)$. Then $\Div S_2 = 0$ if and only if $\Div \left ( f^2 A \right ) = 0$.
	\end{lemma}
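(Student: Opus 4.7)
My strategy is to compute $\Div S_2$ and $\Div(f^2 A)$ directly and observe that both are scalar multiples of the vector field
$$
V := A(\grad f) + \frac{m}{2}\, f \grad f,
$$
whose vanishing is exactly the biconservativity condition \eqref{eq:BiconservativityCharacterization}.

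I would first record two Leibniz-type identities for any smooth function $\alpha$ and any $(1,1)$ tensor field $T$: $\Div(\alpha \id) = \grad \alpha$ and $\Div(\alpha T) = T(\grad \alpha) + \alpha \Div T$. Combined with $\Div A = m \grad f$, which follows from part c) of Lemma \ref{th:PropertiesNablaA} together with the Codazzi equation \eqref{CodazziEquation} (these identify the divergence of $A$ with the trace of $\nabla A$), a short computation applied to $S_2 = -\frac{m^2}{2} f^2 \id + 2mf A$ yields $\Div S_2 = 2m V$, and the same two identities applied to the single tensor $f^2 A$ give $\Div(f^2 A) = 2 f V$.

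The forward implication $\Div S_2 = 0 \Rightarrow \Div(f^2 A) = 0$ is then immediate, since $\Div S_2 = 2mV = 0$ forces $V = 0$ and hence $\Div(f^2 A) = 2 f V = 0$. For the converse, the only subtle point is that the factor $f$ in $\Div(f^2 A) = 2 f V$ may vanish, so a priori one only gets $V = 0$ on the open set $\{f \neq 0\}$. To extend this to all of $M$, I would argue separately on the interior of $\{f = 0\}$ (where $f$ and $\grad f$ both vanish, hence $V = 0$ trivially) and on the boundary of $\{f = 0\}$ (whose points are limits of points in $\{f \neq 0\}$, so $V = 0$ extends there by continuity). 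With $V \equiv 0$ on $M$, we conclude $\Div S_2 = 0$.

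The main, and mildly delicate, step is the treatment of the zero set of $f$ in the converse direction; the divergence computations themselves are completely routine.
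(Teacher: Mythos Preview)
Your proposal is correct and follows essentially the same approach as the paper: both compute $\Div S_2 = 2mV$ and $\Div(f^2 A) = 2fV$ with $V = A(\grad f) + \tfrac{m}{2} f \grad f$, and both handle the converse by noting that on any open set where $f \equiv 0$ one also has $\grad f = 0$, hence $V = 0$. The paper phrases this last step as a short contradiction argument rather than your decomposition into $\{f \neq 0\}$, the interior of $\{f = 0\}$, and its boundary, but the content is identical.
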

	
	\begin{proof}
		By direct computations, on $M$, we obtain
		\begin{equation*}
			\Div \left ( f^2 A \right ) = 2 f \left ( A (\grad f) + \frac m 2 f \grad f \right )
		\end{equation*}
		and 
		\begin{equation*}
			\Div S_2 = 2m \left ( A (\grad f) + \frac m 2 f \grad f \right ).
		\end{equation*}
		If $\Div S_2 = 0$, then $\Div \left ( f^2 A \right ) = 0$.
		
		Suppose that $\Div \left ( f^2 A \right ) = 0$. Then, at any point, 
		\begin{equation*}
			f = 0 \quad \text{or} \quad A (\grad f) + \frac m 2 f \grad f = 0.
		\end{equation*}
		If $A (\grad f) + mf / 2 \grad f = 0$ on $M$, then $\Div S_2 = 0$.
		
		Assume by way of contradiction that $A (\grad f) + m f / 2 \grad f \neq 0$ at a point $p \in M$, and thus at any point of an open neighbourhood $V$ of $p$. Then $f = 0$ on $V$, which implies that $\grad f = 0$ on $V$. Thus, $A (\grad f) + mf / 2 \grad f = 0$ on $V$, contradiction.
		
		Therefore, $\Div \left ( f^2 A \right ) = 0$ implies $\Div S_2 = 0$.
	\end{proof}
	
	From the previous lemma we deduce that for biconservative hypersurfaces the operator $T = f^2 A$ is divergence-free and this leads to an important formula stated in the next lemma. As you will see, this formula will be helpful in the study of biconservative hypersurfaces with constant normalized scalar curvature.
	
	\begin{lemma}
		Let $\varphi : M^m \to N^{m+1} (c)$ be a compact biconservative hypersurface. We consider the $(1, 1)$ tensor field defined by $T = f^2 A$ and its associated Cheng-Yau operator. Then
		\begin{equation} \label{eq:formulaChengYauOperatorf2A}
			0 = \int_M f^2 \left ( \frac 1 2 \Delta |A|^2 + |\nabla A|^2 + \frac 12 \sum _{i, j = 1} ^m (\lambda_i - \lambda_j)^2 R_{ijij} \right ) \ v_g.
		\end{equation}
	\end{lemma}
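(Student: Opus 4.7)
The plan is to apply the general Cheng-Yau formula \eqref{eq:GeneralEquationYau} to $S=A$, multiply by $f^2$ and integrate, and then use that the Cheng–Yau operator associated to the divergence-free tensor $T = f^2 A$ kills $f$ when integrated.

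First, since $(\nabla A)(X,Y)=(\nabla A)(Y,X)$ by Lemma \ref{th:PropertiesNablaA} and $\trace A = mf$, applying \eqref{eq:GeneralEquationYau} with $S=A$ (whose eigenvalues are precisely the $\lambda_i$'s) yields
\begin{equation*}
\frac 1 2 \Delta |A|^2 = -|\nabla A|^2 - m\langle A, \Hess f\rangle - \frac 1 2 \sum_{i,j=1}^m (\lambda_i-\lambda_j)^2 R_{ijij}.
\end{equation*}
Rearranging, the integrand on the right-hand side of \eqref{eq:formulaChengYauOperatorf2A} is exactly $-m\, f^2\langle A, \Hess f\rangle$, so proving \eqref{eq:formulaChengYauOperatorf2A} reduces to showing
\begin{equation*}
\int_M f^2 \langle A, \Hess f\rangle \, v_g = 0.
\end{equation*}

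Next, I would recognize the quantity $f^2\langle A, \Hess f\rangle$ as the action of the Cheng-Yau operator of $T = f^2 A$ on $f$. Indeed, by definition of $\square$,
\begin{equation*}
\square f = \langle T, \Hess f\rangle = \langle f^2 A, \Hess f\rangle = f^2 \langle A, \Hess f\rangle.
\end{equation*}
Since $\varphi$ is biconservative, Lemma \ref{th:ChengYauOperatorSelfAdjoint} ensures that $T=f^2 A$ is divergence-free. Because $M$ is compact, \eqref{eq:consequenceChengYauSelfAdjoint} then gives $\int_M \square f\, v_g = 0$, which is the vanishing we needed.

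The verification is essentially bookkeeping; the only mildly delicate point is making sure the hypotheses of the Cheng–Yau formula \eqref{eq:GeneralEquationYau} and of \eqref{eq:consequenceChengYauSelfAdjoint} are legitimately in force here, namely the Codazzi-type symmetry of $\nabla A$ (granted by Lemma \ref{th:PropertiesNablaA}) and the divergence-freeness of $T = f^2 A$ (granted by Lemma \ref{th:ChengYauOperatorSelfAdjoint} together with biconservativity). With these in hand, the two displayed identities above combine to give \eqref{eq:formulaChengYauOperatorf2A} immediately.
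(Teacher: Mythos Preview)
Your proof is correct and follows essentially the same approach as the paper: both apply the Cheng--Yau formula \eqref{eq:GeneralEquationYau} to $S=A$, identify the resulting Hessian term as the Cheng--Yau operator of $T=f^2A$ acting on (a constant multiple of) $f$, and then invoke the divergence-freeness of $T$ via Lemma \ref{th:ChengYauOperatorSelfAdjoint} together with \eqref{eq:consequenceChengYauSelfAdjoint}. The only cosmetic difference is that the paper works with $\square(mf)$ while you work with $\square f$.
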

	
	\begin{proof}
		In our case, the operator $\square$ is defined by
		\begin{equation*}
			\square \alpha = \langle f^2 A, \Hess \alpha \rangle = f^2 \langle A, \Hess \alpha \rangle,
		\end{equation*}
		for any $\alpha \in C^\infty (M)$.
		
		Using \eqref{eq:GeneralEquationYau}, for $S = A$, we obtain that
		\begin{equation*}
			\square (mf) = - f^2 \left ( \frac 1 2 \Delta |A|^2 + |\nabla A|^2 + \frac 12 \sum _{i, j = 1} ^m (\lambda_i - \lambda_j)^2 R_{ijij} \right ).
		\end{equation*}
		Since $M$ is biconservative, Lemma \ref{th:ChengYauOperatorSelfAdjoint} implies that $\square$ is self-adjoint. Taking into account the fact that $M$ is compact, from \eqref{eq:consequenceChengYauSelfAdjoint}, we obtain $\int _M \square (mf) \ v_g = 0$ and the conclusion follows.
	\end{proof}
	
	Now we recall that the biconservative hypersurfaces in space forms are characterized by \eqref{eq:BiconservativityCharacterization}. In the following lemma we present some valuable properties of the connection forms in the case of biconservative hypersurfaces in space forms. Many of them were already proved (see, for example, \cite{BalmusMontaldoOniciuc2008}, \cite{BibiLoubeauOniciuc2021}, \cite{FuHong2018} and \cite{TurgayUpadhyay2019}), but for the sake of completeness we present their proofs here. We can work on a connected component of $M_A$, but in order to simplify the statement of the lemma we work on the whole $M$.
	
	\begin{lemma} \label{th:LemmaBiconservativeHypersurfacesConnectionForms}
		Let $\varphi : M^m \to N^{m+1} (c)$ be a biconservative hypersurface in a space form. Assume that $\grad f \neq 0$ at any point of $M$ and the number of distinct principal curvatures is constant. Then we have
		\begin{enumerate}[label = \alph*), itemsep=5pt]
			\item $\lambda_1 = - \frac m 2 f$ is a principal curvature of $A$, globally defined, with multiplicity $m_1 = 1$ and $E_1 = \grad f / |\grad f|$, \label{eq:a}
			\item $\nabla _{E_1} E_1 = 0$, \label{eq:b}
			\item $\omega^1_a (E_b) = 0$, for any $a, b \in \overline {2, m}$, $a \neq b$, \label{eq:f}
			\item $E_a(E_1(f)) = 0$, for any $a \in \overline {2, m}$, \label{eq:d}
			\item $\displaystyle \omega^a_1 (E_a) = \frac {E_1 (\lambda_a)} {\lambda_1 - \lambda_a}$, \label{eq:e}
			\item $\displaystyle R_{1a1a} = \frac {- (\lambda_1 - \lambda_a) E_1(E_1(\lambda_a)) + E_1(\lambda_1) E_1(\lambda_a) - 2 (E_1(\lambda_a))^2} {(\lambda_1 - \lambda_a)^2}$, \label{eq:g}
			\item $d \omega^1 = 0$, where by $\{\omega^i\}_{i \in \overline {1, m}}$ we denote the dual frame field of $\{E_i\}_{i \in \overline {1, m}}$. \label{eq:h}
		\end{enumerate}
	\end{lemma}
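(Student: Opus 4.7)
The strategy is to work in the adapted eigenframe $\{E_i\}$ and systematically exploit the Codazzi equation, which in this frame reads
\begin{equation*}
E_i(\lambda_j)\delta_{jk}+(\lambda_j-\lambda_k)\omega^k_j(E_i)=E_j(\lambda_i)\delta_{ik}+(\lambda_i-\lambda_k)\omega^k_i(E_j),
\end{equation*}
together with the symmetry of $\Hess f$ and the integrability of the distribution $E_1^\perp=\ker df$.

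Part a) is immediate from biconservativity: $E_1:=\grad f/|\grad f|$ is a globally defined unit eigenvector with eigenvalue $\lambda_1:=-\tfrac{m}{2}f$. For multiplicity one, if some $E_a$ with $a\geq 2$ satisfied $\lambda_a=\lambda_1$ then Codazzi with $(i,j,k)=(1,a,a)$ would give $E_1(\lambda_a)=(\lambda_1-\lambda_a)\omega^a_1(E_a)=0$, contradicting $E_1(\lambda_a)=-\tfrac{m}{2}|\grad f|\neq 0$. For b), Codazzi with $(i,j,k)=(a,1,1)$ produces $E_a(\lambda_1)=(\lambda_a-\lambda_1)\omega^1_a(E_1)$; since $E_a\perp\grad f$ the left side vanishes and a) forces $\lambda_a\neq\lambda_1$, so $\omega^1_a(E_1)=0$, hence $\nabla_{E_1}E_1=0$. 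Part e) is the same Codazzi identity with $(i,j,k)=(1,a,a)$ rearranged. For d), apply the symmetry of $\Hess f$ at $(E_a,E_1)$: using $E_a(f)=0$, $\omega^1_1(E_a)=0$ and $\omega^1_a(E_1)=0$ from b), both sides collapse to $E_a(E_1(f))=0$.

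Part c) is the subtlest. Because $E_1=\grad f/|\grad f|$, the distribution $E_1^\perp$ coincides with $\ker df$, whose leaves are the level sets of $f$, so it is integrable; hence $\langle[E_a,E_b],E_1\rangle=0$, which gives $\omega^1_a(E_b)=\omega^1_b(E_a)$ for $a,b\geq 2$. Codazzi with $(i,j,k)=(a,b,1)$ yields $(\lambda_a-\lambda_1)\omega^1_a(E_b)=(\lambda_b-\lambda_1)\omega^1_b(E_a)$, and combining the two relations forces $(\lambda_a-\lambda_b)\omega^1_a(E_b)=0$. So $\omega^1_a(E_b)=0$ whenever $\lambda_a\neq\lambda_b$. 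Inside an eigenspace of $A$ of multiplicity at least two Codazzi leaves $\omega^1_a(E_b)$ undetermined, but the symmetric bilinear form $(X,Y)\mapsto\langle\nabla_XY,E_1\rangle$ on that eigenspace can be diagonalized by rotating the frame within the eigenspace, killing the remaining off-diagonal terms without disturbing a), b), d), e); this frame-adjustment step is the one I expect to be the main technical obstacle.

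For f), compute $R(E_1,E_a)E_1=\nabla_{E_1}\nabla_{E_a}E_1-\nabla_{E_a}\nabla_{E_1}E_1-\nabla_{[E_1,E_a]}E_1$ intrinsically. By b) the middle term vanishes; expanding $\nabla_{E_a}E_1=\omega^k_1(E_a)E_k$ and $\nabla_{E_1}E_a=\omega^k_a(E_1)E_k$ and projecting onto $E_a$, essentially every off-diagonal piece dies by b) and c), so (with the sign convention that makes $R_{1a1a}$ the sectional curvature of the $E_1\wedge E_a$-plane) one is left with $R_{1a1a}=-E_1(\omega^a_1(E_a))-(\omega^a_1(E_a))^2$. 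Substituting $\omega^a_1(E_a)=E_1(\lambda_a)/(\lambda_1-\lambda_a)$ from e) and differentiating the quotient produces the stated expression. Finally g) follows from Cartan's first structural equation $d\omega^1=-\omega^1_k\wedge\omega^k$: each $\omega^1_a\wedge\omega^a$ expands as $\omega^1_a(E_1)\omega^1\wedge\omega^a+\sum_{b\geq 2,\,b\neq a}\omega^1_a(E_b)\omega^b\wedge\omega^a$, and both pieces vanish by b) and c) respectively.
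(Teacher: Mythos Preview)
Your argument tracks the paper's proof closely for a), b), d), e), f), and your use of the Cartan structural equation for g) is a clean variant of the paper's direct verification of $d\omega^1(E_i,E_j)=0$ on basis pairs.

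The one genuine gap is in c), precisely where you flag it. You are mistaken that Codazzi leaves $\omega^1_a(E_b)$ undetermined when $\lambda_a=\lambda_b$: you only tried the relation with indices $(i,j,k)=(a,b,1)$. Take instead $(i,j,k)=(b,1,a)$ (equivalently $(1,b,a)$); since $a\neq 1$ and $a\neq b$ the Kronecker terms drop and your own Codazzi identity gives
\[
(\lambda_1-\lambda_a)\,\omega^a_1(E_b)=(\lambda_b-\lambda_a)\,\omega^a_b(E_1).
\]
When $\lambda_a=\lambda_b$ the right-hand side vanishes, and $\lambda_1\neq\lambda_a$ by a), so $\omega^a_1(E_b)=0$ and hence $\omega^1_a(E_b)=0$ directly. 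This is exactly the route the paper takes, splitting into the cases $\lambda_a=\lambda_b$ and $\lambda_a\neq\lambda_b$ and using a different Codazzi instance in each.

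Your proposed workaround---rotating the frame inside each eigenspace to diagonalise the symmetric form $(X,Y)\mapsto\langle\nabla_XY,E_1\rangle$---is therefore unnecessary. It would also yield a weaker statement (c) would hold only for one specially adapted eigenframe rather than every eigenframe with $E_1=\grad f/|\grad f|$), and it introduces its own smoothness issues for the rotated frame where eigenvalues of that form coincide.
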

	
	\begin{proof}
		Before proving this lemma we derive some useful relation from the Codazzi equation. We have
		\begin{align*}
						   & \left ( \nabla_{E_i} A \right ) (E_j) = \left ( \nabla _{E_j} A \right ) (E_i) \\
			\Leftrightarrow& \nabla _{E_i} A(E_j) - A \left ( \nabla _{E_i} E_j \right ) = \nabla _{E_j} A(E_i) - A \left ( \nabla _{E_j} E_i \right )\\
			\Leftrightarrow& \nabla _{E_i} (\lambda_j E_j) - A \left ( \omega^\ell _j (E_i) E_\ell \right ) = \nabla _{E_j} (\lambda_i E_i) - A \left ( \omega ^\ell _i (E_j) E_\ell \right )\\
			\Leftrightarrow& (E_i (\lambda_j)) E_j + \lambda_j \nabla _{E_i} E_j - \omega^\ell _j (E_i) \lambda_\ell E_\ell = (E_j (\lambda_i)) E_i + \lambda_i \nabla _{E_j} E_i - \omega^\ell _i (E_j) \lambda_\ell E_\ell\\
			\Leftrightarrow& (E_i (\lambda_j)) E_j + \lambda_j \omega^\ell _j (E_i) E_\ell - \lambda_\ell \omega^\ell _j (E_i) E_\ell = (E_j (\lambda_i)) E_i + \lambda_i \omega^\ell _i (E_j) E_\ell - \lambda_\ell \omega^\ell _i (E_j) E_\ell\\
			\Leftrightarrow& (E_i (\lambda_j)) E_j + \sum _{\ell = 1} ^m (\lambda_j - \lambda_\ell) \omega^\ell _j (E_i) E_\ell = (E_j (\lambda_i)) E_i + \sum _{\ell = 1} ^m (\lambda_i - \lambda_\ell) \omega^\ell _i (E_j) E_\ell.
		\end{align*}
		Taking into account that $\{E_i\}_{i \in \overline {1, m}}$ is a local orthonormal frame field, we obtain that
		\begin{align}
			& E_i (\lambda_j) = (\lambda_i - \lambda_j) \omega^j _i (E_j), \label{eq:Codazzi1}\\
			& (\lambda_j - \lambda_\ell) \omega^\ell _j (E_i) = (\lambda_i - \lambda_\ell) \omega^\ell _i (E_j), \label{eq:Codazzi2}.
		\end{align}
		for any mutually distinct $i, j, \ell \in \overline {1, m}$. In these relations we do not use the Einstein summation convention.
		
		In order to prove \ref{eq:a}, note that the biconservativity condition \eqref{eq:BiconservativityCharacterization} implies that $- mf / 2$ is a principal curvature of $A$. We can assume that $\lambda_1 = - mf / 2$ and $E_1 = \grad f / |\grad f |$. It follows that $E_a (f) = 0$, for any $a \in \overline {2, m}$. 
		
		Now, suppose by way of contradiction that the multiplicity $m_1$ is at least $2$, i.e. there is $\alpha \in \overline {2, m}$ such that $\lambda_\alpha = \lambda_1$. From \eqref{eq:Codazzi1}, for $i = 1$ and $j = \alpha$, we get that $E_1 (\lambda_1) = 0$, which implies that $E_1 (f) = 0$, contradiction. Thus, $m_1 = 1$.
		
		To prove \ref{eq:b}, we replace $i = a$ and $j = 1$ in \eqref{eq:Codazzi1} which gives us $E_a (\lambda_1) = (\lambda_a - \lambda_1) \omega^1 _a (E_1)$, thus $\omega^1 _a (E_1) = 0$, i.e. $\nabla _{E_1} E_1 = 0$.
		
		In order to show \ref{eq:f}, we first compute $[E_a, E_b] (f)$ in two ways. From the definition of the Lie bracket, we have
		$$
		[E_a, E_b] (f) = E_a (E_b (f)) - E_b (E_a (f)) = 0.
		$$
		On the other hand, 
		\begin{align*}
			[E_a, E_b] (f) =& \left ( \nabla _{E_a} E_b - \nabla _{E_b} E_a \right ) (f)\\
						   =& \left ( \omega^i _b (E_a) - \omega^i _a (E_b) \right ) E_i (f)\\
						   =& \left ( \omega^1 _b (E_a) - \omega^1 _a (E_b) \right ) E_1 (f).
		\end{align*}
		Thus, for any $a, b \in \overline {2, m}$, we obtain $\omega^1 _b (E_a) = \omega^1 _a (E_b)$.
		
		Further, we consider two cases. First, if $\lambda_a = \lambda_b$, then we consider $i = 1$, $j = b$ and $\ell = a$ in \eqref{eq:Codazzi2} and obtain $(\lambda_1 - \lambda_a) \omega^a _1 (E_b) = 0$, which implies that $\omega^1 _a (E_b) = 0$, for any distinct $a, b \in \overline {2, m}$.
		
		Then, if $\lambda_a \neq \lambda_b$, we replace $i = a$, $j = b$ and $\ell = 1$ in \eqref{eq:Codazzi2} and using the fact that $\omega^1 _b (E_a) = \omega^1 _a (E_b)$ we obtain $(\lambda_b - \lambda_a) \omega^1 _a (E_b) = 0$, which implies that $\omega^1 _a (E_b) = 0$, for any distinct $a, b \in \overline {2, m}$. Therefore, relation \ref{eq:f} is proved.
		
		Similarly, relation \ref{eq:d} can be proved by computing in two ways $[E_1, E_a] (f)$, for any $a \in \overline {2, m}$.
		
		Relation \ref{eq:e} is obtained by replacing $i = 1$ and $j = a$ in \eqref{eq:Codazzi1}.
		
		Using \ref{eq:f} we obtain 
		\begin{align*}
			 & R_{1a1a} = R_{a1a1} = \langle R(E_a, E_1) E_1, E_a \rangle\\
			=& \left \langle \nabla _{E_a} \nabla _{E_1} E_1 - \nabla _{E_1} \nabla _{E_a} E_1 - \nabla _{[E_a, E_1]} E_1, E_a \right \rangle \\
			=& - \left \langle \nabla _{E_1} \left ( \omega^i _1 (E_a) E_i \right ) + \nabla _{\nabla _{E_a} E_1 - \nabla _{E_1} E_a} E_1, E_a \right \rangle \\
			=& - \left \langle \left ( E_1 \left ( \omega^i _1 (E_a) \right ) \right ) E_i + \omega^i _1 (E_a) \omega^j _i (E_1) E_j + \omega^i _1 (E_a) \nabla _{E_i} E_1 - \omega^i _a (E_1) \nabla _{E_i} E_1, E_a \right \rangle \\
			=& - E_1 \left ( \omega^a _1 (E_a) \right ) - \omega^i _1 (E_a) \omega^a _i (E_1) - \omega^i _1 (E_a) \omega^a _1 (E_i) + \omega^i _a (E_1) \omega^a _1 (E_i) \\
			=& E_1 \left ( \omega^1 _a (E_a) \right ) + \sum _{b = 2} ^m \omega^1 _b (E_a) \omega^a _b (E_1) - \sum _{b = 2} ^m \omega^1 _b (E_a) \omega^1 _a (E_b) - \sum _{b = 2} ^m \omega^b _a (E_1) \omega^1 _a (E_b) \\
			=& E_1 \left ( \omega^1 _a (E_a) \right ) - \left ( \omega^1 _a (E_a) \right )^2.
		\end{align*}
		Taking into account \ref{eq:e} we obtain \ref{eq:g}.
		
		Now we prove \ref{eq:h}. By definition 
		\begin{equation*}
			\left ( d \omega^1 \right ) (X, Y) = X \left ( \omega^1 Y \right ) - Y \left ( \omega^1 X \right ) - \omega^1 ([X, Y]).
		\end{equation*}
		Since $d \omega^1$ is bilinear and antisymmetric, it is sufficient to prove that 
		\begin{equation*}
			d \omega^1 (E_1, E_a) = 0,\quad \forall a \in \overline {2, m} \quad \text{and} \quad d \omega^1 (E_a, E_b) = 0, \quad \forall a, b \in \overline {2, m},\ a \neq b.
		\end{equation*}
		Using \ref{eq:b} and \ref{eq:f} we obtain
		\begin{align*}
			d \omega^1 (E_1, E_a) =& E_1 \left ( \omega^1 (E_a) \right ) - E_a \left ( \omega^1 (E_1) \right ) - \omega^1 ([E_1, E_a])\\
								  =& - \omega^1 \left ( \nabla _{E_1} E_a - \nabla _{E_a} E_1 \right ) \\
								  =& - \left ( \omega^\ell _a (E_1) - \omega^\ell _1 (E_a) \right ) \omega^1 (E_\ell) \\
								  =& - \left ( \omega^1 _a (E_1) - \omega^1 _1 (E_a) \right ) = 0
		\end{align*}
		and 
		\begin{align*}
			d \omega^1 (E_a, E_b) =& E_a \left ( \omega^1 (E_b) \right ) - E_b \left ( \omega^1 (E_a) \right ) - \omega^1 ([E_a, E_b]) \\
								  =& - \omega^1 \left ( \nabla _{E_a} E_b - \nabla _{E_b} E_a \right ) \\
								  =& - \left ( \omega^\ell _b (E_a) - \omega^\ell _a (E_b) \right ) \omega^1 (E_\ell) \\
								  =& - \left ( \omega^1 _b (E_a) - \omega^1 _a (E_b) \right ) = 0.
		\end{align*}
		Therefore, relation \ref{eq:h} is proved.
	\end{proof}
	
	From formula \eqref{eq:BiconservativityCharacterization} we quickly derive a rigidity result for biconservative hypersurfaces with $\Riem ^M 
	\geq 0$ in non-positively curved space forms. This result does not rely on the compactness of $M$, or $m \leq 10$, or on the constancy of the normalized scalar curvature.
	
	\begin{proposition} \label{th:weakerRigidityResult}
		Let $\varphi : M^m \to N^{m+1} (c)$ be a biconservative hypersurface in a space form with $c \leq 0$. If $\Riem ^M \geq 0$, then $M$ is CMC.
	\end{proposition}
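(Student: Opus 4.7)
The plan is a short proof by contradiction relying only on the Gauss equation and the biconservativity identity. Suppose $M$ is not CMC, so the open set $U = \{p \in M : (\grad f)(p) \neq 0\}$ is nonempty. Since $M_A$ is open and dense in $M$, the intersection $U \cap M_A$ is open and dense in $U$, and on each of its connected components Lemma \ref{th:LemmaBiconservativeHypersurfacesConnectionForms} provides a smooth orthonormal frame $\{E_i\}_{i=1}^m$ diagonalizing $A$ with $E_1 = \grad f / |\grad f|$ and $\lambda_1 = -mf/2$ a simple principal curvature.

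Next I would read off the curvatures of the mixed $E_1 E_a$-planes directly from the Gauss equation \eqref{GaussEquation}. A routine computation using $\langle A E_1, E_1\rangle = \lambda_1$ and $\langle A E_a, E_1\rangle = 0$ gives
\[
R_{1a1a} = c + \lambda_1 \lambda_a = c - \tfrac{m}{2} f \lambda_a
\]
for every $a \in \{2, \ldots, m\}$. The hypothesis $\Riem^M \geq 0$ forces $R_{1a1a} \geq 0$, so, combined with $c \leq 0$,
\[
\tfrac{m}{2} f \lambda_a \leq c \leq 0 \qquad \text{for every } a \in \{2, \ldots, m\}.
\]

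The key algebraic step is then to sum these $m-1$ inequalities. The trace identity $\sum_{i=1}^m \lambda_i = mf$ together with $\lambda_1 = -mf/2$ yields $\sum_{a=2}^m \lambda_a = 3mf/2$, so summing produces
\[
\tfrac{3m^2}{4} f^2 \leq (m-1) c \leq 0.
\]
The left-hand side is non-negative and the right-hand side non-positive, so both vanish and $f = 0$ on the chosen component of $U \cap M_A$. Since this holds on every component, $f$ vanishes on $U \cap M_A$, and by density of $M_A$ together with continuity of $f$ it vanishes on the open set $U$; but then $\grad f \equiv 0$ on $U$, contradicting the definition of $U$. Hence $U$ is empty and $f$ is constant on the connected manifold $M$.

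I do not foresee any serious obstacle. The only mild subtlety is that Lemma \ref{th:LemmaBiconservativeHypersurfacesConnectionForms} requires both $\grad f \neq 0$ and a locally constant number of distinct principal curvatures; this is handled by working on a connected component of $U \cap M_A$ and propagating the pointwise conclusion $f = 0$ to all of $U$ by continuity. Note that the argument never uses compactness, the bound $m \leq 10$, or any assumption on the normalized scalar curvature, in line with the remark preceding the proposition.
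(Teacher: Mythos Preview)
Your proof is correct and follows essentially the same route as the paper's: reduce to an open set where $\grad f\neq 0$, use the biconservativity relation to identify $\lambda_1=-\tfrac{m}{2}f$, read off $R_{1a1a}=c+\lambda_1\lambda_a$ from the Gauss equation, and sum over $a$ to obtain $\tfrac{3m^2}{4}f^2\le(m-1)c\le 0$, yielding the contradiction. The only cosmetic difference is that you pass through $M_A$ and invoke Lemma~\ref{th:LemmaBiconservativeHypersurfacesConnectionForms} to justify a smooth diagonalizing frame, whereas the paper argues pointwise (a smooth frame is not actually needed here, since the summation and the trace identity $\sum_{a\ge 2}\lambda_a=\tfrac{3m}{2}f$ are pointwise facts); but this does no harm and your density/continuity wrap-up is carefully stated.
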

	
	\begin{proof}
		Assume by way of contradiction that $\grad f \neq 0$ at any point of an open subset $V$. On $V$ we can assume that $\lambda_1 = -mf / 2$.
		
		From the hypothesis $R_{1a1a} \geq 0$, for any $a \in \overline {2, m}$ and summing up, we get
		\begin{align*}
			 \sum _{a = 2} ^m R_{1a1a} \geq 0 \Leftrightarrow& \sum _{a = 2} ^m (c + \lambda_1 \lambda_a) \geq 0 \\
			\Leftrightarrow& (m - 1)c + \lambda_1 (mf - \lambda_1 ) \geq 0\\
			\Leftrightarrow& (m - 1) c \geq \frac {3 m^2} 4 f^2.
		\end{align*}
		As $c \leq 0$ and since $f$ cannot vanish on $V$, we get a contradiction.
	\end{proof}
	
	\begin{remark}
		If we additionally assume in Proposition \ref{th:weakerRigidityResult} that $M$ is compact, using Theorem \ref{th:NomizuSmythTheorem}, we obtain a) from Theorem \ref{th:FetcuOniciuc2022} or Theorem \ref{th:FetcuLoubeauOniciucConstantScalarCurvature}.
	\end{remark}
	
	The biconservativity proves to be very useful in handling the general formula \eqref{eq:GeneralFormulaSubmanifolds}. Indeed, using the characterization of biconservativity \eqref{eq:BiconservativityCharacterization} and its consequence \eqref{eq:JHChenInequality}, we directly obtain the the following important inequality.
	\begin{lemma}
		Let $\varphi : M^m \to N^{m+1} (c)$ be a biconservative hypersurface in a space form. Then 
		\begin{equation} \label{eq:MainInequality}
			\frac 1 2 \Delta \left ( |A|^2 + \frac {m^2} 2 f^2 \right ) \leq \frac {3(m - 10)} {m + 26} |\nabla A|^2 - \frac 12 \sum _{i, j = 1} ^m (\lambda_i - \lambda_j)^2 R_{ijij}.
		\end{equation}
	\end{lemma}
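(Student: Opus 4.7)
The plan is to transform the general formula \eqref{eq:GeneralFormulaSubmanifolds} by exploiting the biconservativity condition \eqref{eq:BiconservativityCharacterization} to turn the divergence term into a total Laplacian, and then absorb the resulting $\Delta(f^2)$ into the left-hand side. Once the identity is in that clean shape, the inequality \eqref{eq:JHChenInequality} of J.H.~Chen takes care of the remaining $|\grad f|^2$ term and produces the coefficient $3(m-10)/(m+26)$ automatically.

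Concretely, using $A(\grad f) = -\tfrac{m}{2} f \grad f$ from \eqref{eq:BiconservativityCharacterization}, I would rewrite
\[
-m \Div\bigl(A(\grad f)\bigr) \;=\; \tfrac{m^2}{2}\,\Div(f\grad f) \;=\; \tfrac{m^2}{2}\,\Div\bigl(\grad(f^2/2)\bigr) \;=\; -\tfrac{m^2}{4}\,\Delta(f^2),
\]
where the last equality uses the paper's convention $\Delta = -\Div\grad$. Substituting this into \eqref{eq:GeneralFormulaSubmanifolds} and moving the $\Delta(f^2)$ contribution to the left-hand side gives the identity
\[
\tfrac{1}{2}\Delta\!\left(|A|^2 + \tfrac{m^2}{2} f^2\right) \;=\; -|\nabla A|^2 + m^2 |\grad f|^2 - \tfrac{1}{2}\sum_{i,j=1}^m (\lambda_i - \lambda_j)^2 R_{ijij}.
\]

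To finish, I would invoke \eqref{eq:JHChenInequality}, which rearranges to $m^2|\grad f|^2 \leq \tfrac{4(m-1)}{m+26}|\nabla A|^2$. Inserting this in the displayed identity collapses the two $|\nabla A|^2$-terms into
\[
\left(-1 + \tfrac{4(m-1)}{m+26}\right)|\nabla A|^2 \;=\; \tfrac{3(m-10)}{m+26}\,|\nabla A|^2,
\]
which is exactly the claimed bound. I do not anticipate any real obstacle here: the whole argument is essentially a two-line manipulation once one spots that biconservativity converts the divergence term into $-\tfrac{m^2}{4}\Delta(f^2)$. The only point that deserves some care is the sign bookkeeping under the convention $\Delta = -\Div\grad$, so that the coefficient in front of $f^2$ in the final Laplacian comes out to exactly $m^2/2$ and matches the quantity $|A|^2 + \tfrac{m^2}{2}f^2$ whose constancy will be used in the subsequent rigidity arguments.
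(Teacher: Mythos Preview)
Your argument is correct and matches the paper's approach exactly: the paper states that the inequality follows directly from \eqref{eq:GeneralFormulaSubmanifolds}, the biconservativity condition \eqref{eq:BiconservativityCharacterization}, and the Chen-type estimate \eqref{eq:JHChenInequality}, which is precisely the three-ingredient manipulation you carry out. The sign bookkeeping you flag (under $\Delta = -\Div\grad$) is handled correctly and yields the right coefficient $m^2/2$ inside the Laplacian.
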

	
	At the end of this section, we recall the Okumura Lemma.
	\begin{lemma} [\cite{Okumura1974}] \label{th:OkumuraInequality}
		Let $b_1, \ldots, b_m$ be real numbers such that $\sum _{i = 1} ^m b_i = 0$. Then 
		\begin{equation*}
			- \frac {m - 2} {\sqrt {m (m - 1)}} \left ( \sum _{i = 1} ^m b_i ^2 \right )^{\frac 3 2} \leq \sum _{i = 1} ^m b_i^3 \leq \frac {m - 2} {\sqrt {m (m - 1)}} \left ( \sum _{i = 1} ^m b_i^2 \right ) ^{\frac 3 2}.
		\end{equation*}
		Moreover, equality holds in the right-hand (respectively, left-hand) side if and only if $(m - 1)$ of the $b_i$'s are non-positive (respectively, non-negative) and equal.
	\end{lemma}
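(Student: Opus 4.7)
The plan is a constrained optimization argument on a sphere. Both sides of the inequality are homogeneous of degree $3$ in $b = (b_1, \ldots, b_m)$, and if $\sum b_i^2 = 0$ then every $b_i$ vanishes and the claim is trivial; so after rescaling I may assume $\sum b_i^2 = 1$ and work on the compact set
$$
S = \left\{ b \in \mathbb{R}^m : \sum_{i=1}^m b_i = 0,\ \sum_{i=1}^m b_i^2 = 1 \right\}.
$$
Then $F(b) := \sum_{i=1}^m b_i^3$ attains its maximum and minimum on $S$, and the task is to show that these extrema equal $\pm (m - 2)/\sqrt{m(m - 1)}$ and to identify the extremizers.

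At every point of $S$ the constraint gradients $(1, \ldots, 1)$ and $(2b_1, \ldots, 2b_m)$ are linearly independent, so Lagrange multipliers apply at any extremum of $F|_S$ and yield scalars $\lambda, \mu$ with $3 b_i^2 - 2 \mu b_i - \lambda = 0$ for every $i$. Each $b_i$ is therefore a root of a common quadratic in $t$ and so takes at most two distinct values, say $\alpha$ with multiplicity $p$ and $\beta$ with multiplicity $q = m - p$, where $1 \leq p \leq m - 1$ (the single-value case is excluded by $\sum b_i^2 = 1$). Solving $p \alpha + q \beta = 0$ together with $p \alpha^2 + q \beta^2 = 1$ gives, up to an overall sign, $\alpha = -\sqrt{q/(pm)}$ and $\beta = \sqrt{p/(qm)}$, after which a direct computation produces
$$
F(b) = p \alpha^3 + q \beta^3 = \pm \frac{p - q}{\sqrt{m\, p q}} = \pm \frac{2p - m}{\sqrt{m\, p(m - p)}}.
$$

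It remains to maximize $\psi(p) := (2p - m)^2 / (p(m - p))$ over $p \in \{1, \ldots, m - 1\}$. A short computation yields $\psi'(p) = m^2 (2p - m)/(p(m - p))^2$, so $\psi$ strictly decreases on $[1, m/2]$ and strictly increases on $[m/2, m - 1]$, attaining its maximum $\psi(1) = \psi(m - 1) = (m - 2)^2/(m - 1)$ at the endpoints. Hence $|F(b)| \leq (m - 2)/\sqrt{m(m - 1)}$. Equality in the upper bound occurs precisely when $(p, q) = (m - 1, 1)$ and the sign is chosen so that $\beta > 0$; the $m - 1$ repeated entries then equal $\alpha = -1/\sqrt{m(m - 1)} < 0$, matching the stated non-positivity characterization, and the lower-bound case follows by symmetry under $b \mapsto -b$. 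The only mild subtlety is confirming that no critical configuration outside the two-value pattern could give a larger $|F|$; this is secured by the fact that the Lagrange conditions force at most two distinct values, after which the discrete optimization of $\psi$ via the monotonicity computation above closes the argument.
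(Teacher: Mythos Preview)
The paper does not supply its own proof of this lemma: it is stated as a classical result and attributed to Okumura \cite{Okumura1974}, so there is no in-paper argument to compare against.

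Your Lagrange-multiplier argument is correct. The reduction by homogeneity to the compact set $S=\{b:\sum b_i=0,\ \sum b_i^2=1\}$ is the standard move; the constraint gradients are indeed linearly independent on $S$ since $\sum b_i=0$ and $\sum b_i^2=1$ rule out $b$ being a scalar multiple of $(1,\dots,1)$. The stationarity condition $3b_i^2-2\mu b_i-\lambda=0$ correctly forces at most two distinct values, and your closed-form computation
\[
F(b)=\frac{p-q}{\sqrt{m\,pq}}=\frac{2p-m}{\sqrt{m\,p(m-p)}}
\]
is right, as is the derivative $\psi'(p)=m^2(2p-m)/\bigl(p(m-p)\bigr)^2$ and the conclusion that $\psi$ is maximized at $p\in\{1,m-1\}$ with value $(m-2)^2/(m-1)$. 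The equality discussion is also fine: after rescaling, the extremal configurations are exactly those with $m-1$ equal entries of one sign and a single entry of the opposite sign (together with the degenerate all-zero case, which you handled at the outset), matching the stated characterization. The only cosmetic point is that the cases $(p,q)=(m-1,1)$ with $\beta>0$ and $(p,q)=(1,m-1)$ with $\beta<0$ describe the same configuration up to relabeling, so your phrase ``precisely when $(p,q)=(m-1,1)$'' should be read modulo that symmetry; this does not affect the argument.
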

	
	\section{Biconservative hypersurfaces with dimension at most $10$} \label{sec:mLess10}
	
	The proof of Theorem \ref{th:FetcuOniciuc2022} was not provided explicitly in \cite{FetcuOniciuc2022} but, as indicated by the authors, it can be established by following the same steps used in the proof of Theorem \ref{th:FetcuLoubeauOniciuc2021}, which is quite long and laborious. It relies on utilizing the equality condition in \eqref{eq:JHChenInequality} and the fact that
	\begin{equation} \label{eq:TAZero}
		0 = - \frac 1 2 \sum _{i, j = 1} ^{m} (\lambda_i - \lambda_j)^2 R_{ijij} = \langle - \trace (RA) ( \cdot, \ , \cdot), A \rangle.
	\end{equation}
	From the equality condition in \eqref{eq:JHChenInequality}, one obtains the expressions of $(\nabla A) (E_i, E_j)$, where $\{E_i\}_{i \in \overline {1, m}}$ is a local orthonormal frame field which diagonalizes the shape operator. Finally, replacing these in 
	$$
	\sum _{i, j = 1} ^m \left \langle \left ( \nabla ^2 A \right ) (E_j, E_i, E_i) - \left ( \nabla ^2 A \right ) (E_i, E_j, E_i), A (E_j) \right \rangle = 0,
	$$
	which follows from \eqref{eq:TAZero}, one obtains that $f$ is constant. Therefore, we can apply the result from \cite{NomizuSmyth1969} and conclude.
	
	Next we present two alternative proofs for Theorem \ref{th:FetcuOniciuc2022} and two generalizations of it. Also, in this section we present another proof of a known result concerning biconservative PNMC submanifolds in space forms.
	
	\subsection{The first alternative proof of Theorem \ref{th:FetcuOniciuc2022}}
	
	Since $m \leq 10$ and $\Riem ^M \geq 0$, the right-hand side of \eqref{eq:MainInequality} is non-positive, i.e.
	\begin{equation*}
		\frac 12 \Delta \left ( |A|^2 + \frac {m^2} 2 f^2 \right ) \leq 0.
	\end{equation*}
	Since $M$ is compact, the last relation implies that $|A|^2 + m^2 f^2 / 2$ is constant on $M$, i.e. the left-hand side of \eqref{eq:MainInequality} vanishes on $M$. Therefore we obtain that on $M$ we have 
	\begin{equation*}
		\sum _{i, j = 1} ^m (\lambda_i - \lambda_j)^2 R_{ijij} = 0.
	\end{equation*}
	Using again the fact that $\Riem ^M \geq 0$, we obtain 
	\begin{equation*}
		(\lambda_i - \lambda_j)^2 R_{ijij} = 0, \quad \forall i, j \in \overline {1, m}.
	\end{equation*}
	Since, from the Gauss equation, for any distinct $i, j \in \overline {1, m}$, we have $R_{ijij} = c + \lambda_i \lambda_j$, we deduce that
	\begin{equation*}
		(\lambda_i - \lambda_j)^2 (c + \lambda_i \lambda_j) = 0, \quad \forall i, j \in \overline {1, m}.
	\end{equation*}
	In fact, on $M$, we obtain
	\begin{equation} \label{eq:RelationPrincipalCurvatures}
		(\lambda_i - \lambda_j) (c + \lambda_i \lambda_j) = 0, \quad \forall i, j \in \overline {1, m}.
	\end{equation}
	The last relation implies that $M$ has at most two distinct principal curvatures at any point of $M$.
	
	Consider now the subset $M_A$ of all point in $M$ at which the number of distinct principal curvatures is locally constant. In the following we will show that $\grad f = 0$ on every connected component of $M_A$ and thus, from density, we will conclude that $\grad f = 0$ on $M$, i.e. $f$ is constant. Then, from \eqref{eq:GeneralFormulaSubmanifolds} we get that $\nabla A = 0$.
	
	We choose an arbitrary connected component of $M_A$. Since $M$ has at most two distinct principal curvatures, on this component we have: either each of its points is umbilical, or each of its points has exactly two distinct principal curvatures. For simplicity, we denote by $M$ the chosen connected component.
	
	If $M$ is umbilical, then it is CMC.
	
	We suppose now that $M$ has exactly two distinct principal curvatures at any point of $M$. In this case, $A$ is locally diagonalizable with respect to a smooth orthonormal frame field $\{E_i\}_{i \in \overline {1, m}}$. Denote
	\begin{equation*}
		\lambda_1 = \ldots = \lambda_{m_1} \quad \text{and} \quad \lambda_{m_1 + 1} = \ldots = \lambda_m.
	\end{equation*}
	
	Assume that $\grad f \neq 0$ and we will obtain a contradiction. If necessary, we can restrict ourselves to an open subset of $M$, denoted again by $M$, such that $\grad f \neq 0$ at any point of $M$.
	
	Now, using \ref{eq:a} from Lemma \ref{th:LemmaBiconservativeHypersurfacesConnectionForms} we can assume that
	\begin{equation*}
		\lambda_1 = - \frac m 2 f, \quad m_1 = 1 \quad \text{and} \quad E_1 = \frac {\grad f} {|\grad f|}
	\end{equation*}
	on $M$. Since $\trace A = mf$, we have
	\begin{equation*}
		\lambda_2 = \frac {3m} {2(m - 1)} f.
	\end{equation*}
	Using \eqref{eq:RelationPrincipalCurvatures}, we obtain
	\begin{equation*}
		0 = c + \lambda_1 \lambda_2 = c - \frac {3m^2} {4(m - 1)} f^2.
	\end{equation*}
	Note that this relation is false if $c \leq 0$. If $c > 0$, since $f$ is smooth, we obtain that $f$ is constant on $M$, which contradicts $\grad f \neq 0$ at any point of $M$. \hfill \qed
	
	\subsection{The second alternative proof of Theorem \ref{th:FetcuOniciuc2022}} \label{subsec:SecondProof}
	
	As in the first proof, using \eqref{eq:MainInequality}, we obtain that $|A|^2 + m^2 f^2 / 2$ is constant on $M$ and $(\lambda_i - \lambda_j)^2 R_{ijij} =0$ on $M$.
	
	We will prove that $\grad f = 0$ on every connected component of $M_A$ and thus, from density, we will obtain that $\grad f = 0$ on $M$. We choose an arbitrary component of $M_A$, which is also denoted by $M$. Assume that $\grad f \neq 0$ at any point of $M$ and we will obtain a contradiction. So, on $M$, the principal curvatures $\lambda_i$'s are smooth, $i \in \overline {1, m}$, and $A$ is locally smoothly diagonalizable with respect to a local orthonormal frame field $\{E_i\}_{i \in \overline {1, m}}$.
	
	Using \ref{eq:a} from Lemma \ref{th:LemmaBiconservativeHypersurfacesConnectionForms}, we can assume that
	\begin{equation*}
		\lambda_1 = - \frac m 2 f, \quad m_1 = 1 \quad \text{and} \quad E_1 = \frac {\grad f} {|\grad f|}
	\end{equation*}
	on $M$.
	
	Since $m_1 = 1$ and $(\lambda_i - \lambda_j)^2 R_{ijij} =0$, we have $R_{1a1a} = 0$, for any $a \in \overline {2, m}$. Using this and \ref{eq:g} from Lemma \ref{th:LemmaBiconservativeHypersurfacesConnectionForms}, we obtain 
	\begin{equation} \label{eq:RelationE1E1}
		(\lambda_1 - \lambda_a) E_1 (E_1 (\lambda_a)) = E_1 (\lambda_1) E_1 (\lambda_a) - 2 (E_1 (\lambda_a))^2.
	\end{equation} 
	We have
	\begin{align*}
		|A|^2 + \frac {m^2} 2 f^2 =& \lambda_1^2 + \sum _{a = 2} ^m \lambda_a^2 + \frac {m^2} 2 f^2 = \frac {m^2} 4 f^2 + \sum _{a = 2} ^m \lambda_a^2 + \frac {m^2} 2 f^2 \\
								  =& \frac {3m^2} 4 f^2 + \sum _{a = 2} ^m \lambda_a ^2.
	\end{align*}
	Since $|A|^2 + m^2 f^2 / 2$ is constant, we obtain
	\begin{equation} \label{eq:RelationFromA2}
		\frac {3m^2} 4 f E_1 (f) + \sum _{a = 2} ^m \lambda_a E_1 (\lambda_a) = 0.
	\end{equation}
	On the other hand, from the fact that $\trace A = mf$, we deduce
	\begin{equation} \label{eq:RelationFromTraceA}
		\sum _{a = 2} ^m E_1 (\lambda_a) = \frac {3m} 2 E_1 (f).
	\end{equation}
	Replacing \eqref{eq:RelationFromTraceA} in \eqref{eq:RelationFromA2}, we obtain
	\begin{equation*}
		\frac m 2 f \sum _{a = 2} ^m E_1 (\lambda_a) + \sum _{a = 2} ^m \lambda_a E_1 (\lambda_a) = 0,
	\end{equation*}
	which is equivalent to 
	\begin{equation*}
		\sum _{a = 2} ^m (\lambda_1 - \lambda_a) E_1 (\lambda_a) = 0.
	\end{equation*}
	Taking the derivative of the above relation with respect to $E_1$ and using \eqref{eq:RelationE1E1}, we get
	\begin{align*}
		& \sum _{a = 2} ^m \left ( (E_1 (\lambda_1) - E_1 (\lambda_a)) E_1 (\lambda_a) + (\lambda_1 - \lambda_a) E_1 (E_1 (\lambda_a)) \right ) = 0\\
		\Leftrightarrow& \sum _{a = 2} ^m \left ( E_1 (\lambda_1) E_1 (\lambda_a) - (E_1 (\lambda_a))^2 + E_1 (\lambda_1) E_1 (\lambda_a) - 2 (E_1 (\lambda_a))^2 \right ) = 0\\
		\Leftrightarrow& \sum _{a = 2} ^m \left ( 2 E_1 (\lambda_1) E_1 (\lambda_a) - 3 (E_1 (\lambda_a))^2 \right ) = 0\\
		\Leftrightarrow& -m E_1 (f) \sum _{a = 2} ^m E_1 (\lambda_a) - 3 \sum _{a = 2} ^m (E_1 (\lambda_a))^2 = 0\\
		\Leftrightarrow& -m E_1 (f) \frac {3m} 2 E_1 (f) - 3 \sum _{a = 2} ^m (E_1 (\lambda_a))^2 = 0\\
		\Leftrightarrow& - \frac {3m^2} 2 (E_1 (f))^2 - 3 \sum _{a = 2} ^m (E_1 (\lambda_a))^2 = 0\\
		\Leftrightarrow& \frac {m^2} 2 |\grad f|^2 + \sum _{a = 2} ^m (E_1 (\lambda_a))^2 = 0.
	\end{align*}
	Therefore, $|\grad f| = 0$, which contradicts the assumption that $\grad f \neq 0$. \hfill \qed
	
	\subsection{The case of PNMC biconservative submanifolds}
	
	In this subsection, we first recall the definition of PNMC submanifolds and some well-known results concerning them. After this, we will provide a new proof of a known rigidity result for PNMC submanifolds in space forms.
	
	Let $\varphi : M^m \to N^n$ be a submanifold such that its mean curvature vector field $H$ does not vanish at any point of $M$. In this case, we denote by $h = |H| > 0$ and by $\eta_0 = H / |H|$ a unit normal vector field with the same direction as $H$. If $\eta_0$ is parallel in the normal bundle, i.e. $\nabla ^\perp \eta_0 = 0$, we say that $M$ has parallel normalized vector field and we call $M$ a PNMC submanifold. We denote by $A_0$ the shape operator of $M$ in the direction of $\eta_0$. As in the case of hypersurfaces, we can define the set $M_{A_0}$ of all points of $M$ at which the number of distinct eigenvalues of $A_0$ is locally constant. Also, in this case, $M_{A_0}$ is open and dense in $M$. On a connected component of $M_{A_0}$, the eigenvalues of $A_0$ are smooth functions and $A_0$ is (locally) diagonalizable and, in this case, we also denote the local orthonormal frame field which diagonalizes $A_0$ by $\{E_i\}_{i \in \overline {1, m}}$. The eigenvalues of $A_0$ are denoted by $\lambda_i$. The multiplicities of the distinct eigenvalues of $A_0$ are denoted by $m_1, \ldots, m_\ell$. 
	
	Again, the connection $1$-forms corresponding to $\nabla$ are defined by $\nabla _{E_i} E_j = \omega^k _j (E_i) E_k$.
	
	As in the case of hypersurfaces, we briefly present some well-known properties of the shape operator $A_0$.
	
	\begin{lemma} [\cite{FetcuLoubeauOniciuc2021}]
		Let $\varphi : M^m \to N^n (c)$ be a PNMC submanifold in a space form. Then
		\begin{enumerate}[label = \alph*)] 
			\item $\nabla A_0$ is symmetric,
			\item $\langle (\nabla A_0) (\cdot, \cdot), \cdot \rangle$ is totally symmetric,
			\item $\trace A_0 = m h$,
			\item $\Div A_0 = \trace (\nabla A_0) = m \grad h$.
		\end{enumerate}
	\end{lemma}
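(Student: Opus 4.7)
The plan is to reduce the lemma to an almost verbatim analogue of Lemma~\ref{th:PropertiesNablaA} in the hypersurface case; the only genuinely new ingredient is the correction term that the normal connection produces when one passes from the second fundamental form $B$ to the shape operator $A_\eta$ in codimension greater than one, and the whole point of the PNMC hypothesis is that this correction vanishes for $\eta = \eta_0$. I will start from the ambient Codazzi identity in the space form $N^n(c)$, which forces the normal part of $R^N$ to vanish and hence gives $(\nabla_X B)(Y,Z) = (\nabla_Y B)(X,Z)$. Pairing with an arbitrary normal section $\eta$, using $\langle B(Y,Z), \eta\rangle = \langle A_\eta Y, Z\rangle$, and exploiting the compatibility of $\nabla^\perp$ with the metric, I obtain the corrected Codazzi identity
\begin{equation*}
(\nabla_X A_\eta)(Y) - A_{\nabla^\perp_X \eta}(Y) = (\nabla_Y A_\eta)(X) - A_{\nabla^\perp_Y \eta}(X).
\end{equation*}
Specializing $\eta = \eta_0$ and invoking $\nabla^\perp \eta_0 = 0$ collapses this to $(\nabla_X A_0)(Y) = (\nabla_Y A_0)(X)$, which is item a).

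For item b), I combine a) with the self-adjointness of $A_0$: because $\nabla$ is metric-compatible and $A_0$ is symmetric, each $(\nabla_X A_0)$ is still a symmetric endomorphism, so $\langle (\nabla A_0)(X,Y), Z\rangle$ is automatically symmetric in $(Y,Z)$, which together with the symmetry in $(X,Y)$ coming from a) yields total symmetry. For item c), I use that $mH = \trace B$ is the definition of the mean curvature vector and pair with $\eta_0$ to read off $\trace A_0 = \langle mH, \eta_0\rangle = m|H| = mh$.

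For item d), I invoke the total symmetry from b) to swap the traced index with $X$:
\begin{equation*}
\langle \Div A_0, X\rangle = \sum_{i=1}^m \langle (\nabla_{E_i} A_0)(E_i), X\rangle = \sum_{i=1}^m \langle (\nabla_X A_0)(E_i), E_i\rangle = \trace(\nabla_X A_0).
\end{equation*}
Since the trace commutes with covariant differentiation (immediate in a frame with $\nabla E_i = 0$ at the base point), the right-hand side equals $X(\trace A_0) = mX(h) = m\langle \grad h, X\rangle$, so $\Div A_0 = m \grad h$, and the middle equality $\Div A_0 = \trace(\nabla A_0)$ is just the definition of the divergence of a symmetric $(1,1)$-tensor.

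The only step requiring care is the derivation of the corrected Codazzi identity in higher codimension, since the $A_{\nabla^\perp \eta}$ term is exactly what obstructs a mechanical transcription of the hypersurface argument; once this is set up, the PNMC assumption $\nabla^\perp \eta_0 = 0$ kills the obstruction and the remaining items follow by the same reasoning as in Lemma~\ref{th:PropertiesNablaA}.
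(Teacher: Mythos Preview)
Your proof is correct. The paper itself does not prove this lemma --- it is quoted from \cite{FetcuLoubeauOniciuc2021} without argument --- so there is no in-paper proof to compare against; your route via the corrected Codazzi identity $(\nabla_X A_\eta)(Y) - A_{\nabla^\perp_X \eta}(Y) = (\nabla_Y A_\eta)(X) - A_{\nabla^\perp_Y \eta}(X)$, followed by specialization to $\eta = \eta_0$ with $\nabla^\perp \eta_0 = 0$, is the standard one and is precisely what the cited reference does, making $A_0$ satisfy the same formal Codazzi and trace identities as the hypersurface shape operator in Lemma~\ref{th:PropertiesNablaA}.
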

	
	One can prove that biconservative PNMC submanifolds are characterized by 
	\begin{equation} \label{eq:BiconservativityCharacterizationPNMC}
		A_0 (\grad h) = -\frac m 2 h \grad h,
	\end{equation}
	see \cite{FetcuLoubeauOniciuc2021}.
	
	The following lemma for PNMC biconservative submanifolds in space forms provides us similar properties of the connection forms as in the case of hypersurfaces. The proof of this lemma follows the same steps used in the proof of Lemma \ref{th:LemmaBiconservativeHypersurfacesConnectionForms}.
	
	\begin{lemma}
		Let $\varphi : M^m \to N^n (c)$ be a biconservative $PNMC$ submanifold in a space form. Assume that $\grad h \neq 0$ at any point of $M$ and the number of distinct principal curvatures of $A_0$ is constant. Then we have
		\begin{enumerate}[label = \alph*), itemsep=5pt]
			\item $\lambda_1 = - \frac m 2 h$ is a principal curvature of $A_0$, globally defined, with multiplicity $m_1 = 1$ and $E_1 = \grad h / |\grad h|$, \label{eq:aPNMC}
			\item $\nabla _{E_1} E_1 = 0$, \label{eq:bPNMC}
			\item $\omega^1_a (E_b) = 0$, for any $a, b \in \overline {2, m}$, $a \neq b$, \label{eq:fPNMC}
			\item $E_a(E_1(f)) = 0$, for any $a \in \overline {2, m}$, \label{eq:dPNMC}
			\item $\displaystyle \omega^a_1 (E_a) = \frac {E_1 (\lambda_a)} {\lambda_1 - \lambda_a}$, \label{eq:ePNMC}
			\item $\displaystyle R_{1a1a} = \frac {- (\lambda_1 - \lambda_a) E_1(E_1(\lambda_a)) + E_1(\lambda_1) E_1(\lambda_a) - 2 (E_1(\lambda_a))^2} {(\lambda_1 - \lambda_a)^2}$, \label{eq:gPNMC}
			\item $d \omega^1 = 0$, where by $\{\omega^i\}_{i \in \overline {1, m}}$ we denote the dual frame field of $\{E_i\}_{i \in \overline {1, m}}$. \label{eq:hPNMC}
		\end{enumerate}
	\end{lemma}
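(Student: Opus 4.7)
The plan is to mirror the proof of Lemma \ref{th:LemmaBiconservativeHypersurfacesConnectionForms} verbatim, replacing $A$, $f$, and $\eta$ by $A_0$, $h$, and $\eta_0$, respectively. The only nontrivial input needed for this transfer is a Codazzi-type identity for $A_0$, namely $(\nabla_X A_0)(Y) = (\nabla_Y A_0)(X)$ for all $X, Y \in C(TM)$. Once this identity is established, the entire algebraic skeleton of the previous proof carries over, so the heart of the work is justifying this reduction in the PNMC setting.

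To obtain the identity, I would start from the Codazzi--Mainardi equation for submanifolds of a space form, which in this ambient setting reduces to $(\nabla_X^\perp B)(Y, Z) = (\nabla_Y^\perp B)(X, Z)$, and pair both sides with $\eta_0$. A short computation using $\nabla^\perp \eta_0 = 0$ yields $\langle (\nabla_X^\perp B)(Y, Z), \eta_0 \rangle = \langle (\nabla_X A_0)(Y), Z \rangle$, so the symmetry on the right follows. From here, expanding $(\nabla_{E_i} A_0)(E_j) = (\nabla_{E_j} A_0)(E_i)$ against the orthonormal frame produces the two scalar consequences
\begin{equation*}
E_i(\lambda_j) = (\lambda_i - \lambda_j)\, \omega^j_i(E_j), \qquad (\lambda_j - \lambda_\ell)\, \omega^\ell_j(E_i) = (\lambda_i - \lambda_\ell)\, \omega^\ell_i(E_j),
\end{equation*}
valid for mutually distinct indices $i, j, \ell$, exactly as in the hypersurface case.

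With these in hand, each claim transfers routinely. Item (a) uses \eqref{eq:BiconservativityCharacterizationPNMC} to identify $-mh/2$ as a principal curvature with eigenvector $\grad h / |\grad h|$, and multiplicity $m_1 = 1$ is forced by the first Codazzi identity at $(i, j) = (1, \alpha)$ combined with $E_1(h) \neq 0$. Items (b) through (e) follow by specializing the same Codazzi identities and by computing $[E_a, E_b](h)$ and $[E_1, E_a](h)$ in two ways, exploiting $E_a(h) = 0$ for $a \geq 2$. Items (f) and (g), giving the formula for $R_{1a1a}$ and the closedness of $\omega^1$, are purely intrinsic: they depend only on the definition of $R$, the connection forms, and the previously established items.

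The main (and essentially only) obstacle is the reduction of Codazzi--Mainardi to a scalar identity for $A_0$. In higher codimension, the covariant derivative of a shape operator typically couples different normal directions through terms involving $A_{\nabla^\perp_X \eta_0}$, and such terms would obstruct the entire argument. The PNMC hypothesis is precisely what makes these coupling terms vanish, so the hypersurface proof recycles without structural change.
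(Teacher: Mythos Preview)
Your proposal is correct and matches the paper's approach exactly: the paper states only that ``the proof of this lemma follows the same steps used in the proof of Lemma \ref{th:LemmaBiconservativeHypersurfacesConnectionForms},'' and the symmetry of $\nabla A_0$ you derive from the PNMC condition is precisely the content of item a) of the preceding lemma quoted from \cite{FetcuLoubeauOniciuc2021}. Your identification of the PNMC hypothesis as the mechanism killing the $A_{\nabla^\perp_X \eta_0}$ coupling terms is the correct and essential observation.
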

	
	As in the case of hypersurfaces, one can prove the following lemma.
	\begin{lemma} [\cite{FetcuLoubeauOniciuc2021}] \label{th:JHChenInequalityPNMC}
		Let $\varphi : M^m \to N^n (c)$ be a PNMC biconservative submanifold in a space form. Then 
		\begin{equation*}
			|\nabla A_0|^2 \geq \frac {m^2 (m + 26)} {4(m - 1)} |\grad h|^2.
		\end{equation*}
	\end{lemma}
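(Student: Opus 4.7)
The plan is to mimic the proof of the hypersurface inequality \eqref{eq:JHChenInequality}, with $A_0$ and $h$ playing the roles of $A$ and $f$. Since the bound is trivial where $\grad h=0$, I restrict to an open subset on which $\grad h\neq 0$ and the number of distinct eigenvalues of $A_0$ is constant, so that the previous lemma applies: $\lambda_1=-mh/2$ has multiplicity one, $E_1=\grad h/|\grad h|$, $\nabla_{E_1}E_1=0$, and $\omega^1_a(E_b)=0$ for distinct $a,b\in\overline{2,m}$.

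Next I would set $T_{ijk}=\langle (\nabla_{E_i}A_0)(E_j),E_k\rangle$, which is totally symmetric in $i,j,k$ by the Codazzi-type identity. A direct computation on the neighbourhood where $\{E_i\}$ diagonalizes $A_0$ yields $T_{ijj}=E_i(\lambda_j)$ for all $i,j$, and $T_{ijk}=(\lambda_j-\lambda_k)\omega^k_j(E_i)$ whenever $j\neq k$. Combining these formulas with items \ref{eq:bPNMC} and \ref{eq:fPNMC} and using the total symmetry of $T$, every component $T_{ijk}$ containing at least one index equal to $1$ vanishes except $T_{111}=E_1(\lambda_1)$ and the three components $T_{1aa}=T_{a1a}=T_{aa1}=E_1(\lambda_a)$ for each $a\in\overline{2,m}$. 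Discarding the (non-negative) contributions of the remaining components gives
$$|\nabla A_0|^2\geq T_{111}^2+3\sum_{a=2}^m(E_1(\lambda_a))^2.$$

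It then remains to evaluate these two quantities. Since $\lambda_1=-(m/2)h$, one has $T_{111}^2=(m^2/4)|\grad h|^2$, and differentiating $\trace A_0=mh$ along $E_1$ gives $\sum_{a=2}^m E_1(\lambda_a)=(3m/2)|\grad h|$. The Cauchy--Schwarz inequality then forces
$$\sum_{a=2}^m(E_1(\lambda_a))^2\geq \frac{1}{m-1}\left(\sum_{a=2}^mE_1(\lambda_a)\right)^2=\frac{9m^2}{4(m-1)}|\grad h|^2,$$
and substituting into the previous display yields the announced bound $\frac{m^2(m+26)}{4(m-1)}|\grad h|^2$. The main point to verify is that the formulas for $T_{ijk}$ and the vanishing of the relevant connection forms carry over verbatim from the hypersurface case to the PNMC setting, which they do thanks to the analogue lemma recorded just above, so the argument is essentially a bookkeeping exercise once the correct terms have been isolated.
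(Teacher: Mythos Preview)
Your argument is correct and is precisely the route the paper has in mind: the paper does not give an explicit proof of this lemma but simply cites \cite{FetcuLoubeauOniciuc2021} and remarks that ``as in the case of hypersurfaces, one can prove the following lemma,'' i.e.\ the computation for $A_0$ and $h$ is a verbatim transcription of the J.\,H.\ Chen argument for $A$ and $f$ using the PNMC analogue of Lemma~\ref{th:LemmaBiconservativeHypersurfacesConnectionForms}. The only point worth making explicit is the passage from the dense open set $M_{A_0}\cap\{\grad h\neq 0\}$ to all of $\{\grad h\neq 0\}$ by continuity, but this is routine.
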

	
	Using this Lemma \ref{th:JHChenInequalityPNMC}, \eqref{eq:GeneralEquationYau} and \eqref{eq:BiconservativityCharacterizationPNMC}, we obtain 
	\begin{lemma}
		Let $\varphi : M^m \to N^n (c)$ be a PNMC biconservative submanifold in a space form. Then
		\begin{equation*}
			\frac 1 2 \Delta \left ( |A_0|^2 + \frac {m^2} 2 h^2 \right ) \leq \frac {3(m - 10)} {m + 26} |\nabla A_0|^2 - \frac 1 2 \sum _{i, j = 1} ^m (\lambda_i - \lambda_j)^2 R_{ijij},
		\end{equation*}
		where $\lambda_i$'s are the eigenvalues of $A_0$.
	\end{lemma}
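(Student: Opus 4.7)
The plan is to mimic, step for step, the derivation of \eqref{eq:MainInequality} in the hypersurface case. The starting point is the Cheng-Yau formula \eqref{eq:GeneralEquationYau} applied to the symmetric $(1,1)$ tensor $S = A_0$: the previous lemma of this subsection records precisely the two hypotheses needed, namely that $\nabla A_0$ is totally symmetric and that $\trace A_0 = mh$. The formula then specializes to
\begin{equation*}
\frac{1}{2}\Delta|A_0|^2 = -|\nabla A_0|^2 - m\langle A_0, \Hess h\rangle - \frac{1}{2}\sum_{i,j=1}^m(\lambda_i - \lambda_j)^2 R_{ijij}.
\end{equation*}

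The next step is to evaluate $\langle A_0, \Hess h\rangle$ using the biconservativity. The general divergence identity $\Div(A_0(\grad h)) = \langle \Div A_0, \grad h\rangle + \langle A_0, \Hess h\rangle$, valid for any symmetric $(1,1)$ tensor, together with $\Div A_0 = m\grad h$ from the earlier lemma and $A_0(\grad h) = -\frac{m}{2}h\grad h$ from \eqref{eq:BiconservativityCharacterizationPNMC}, allows me to evaluate the left-hand side as $-\frac{m}{2}\Div(h\grad h) = -\frac{m}{2}(|\grad h|^2 - h\Delta h)$, and hence to solve for $\langle A_0, \Hess h\rangle = \frac{m}{2}h\Delta h - \frac{3m}{2}|\grad h|^2$.

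Then I would transfer the $h\Delta h$ contribution to the left-hand side via the elementary identity $\frac{1}{2}\Delta h^2 = h\Delta h - |\grad h|^2$. Adding $\frac{1}{2}\Delta\bigl(\frac{m^2}{2}h^2\bigr) = \frac{m^2}{2}h\Delta h - \frac{m^2}{2}|\grad h|^2$ to the Cheng-Yau formula produces a clean cancellation of the $h\Delta h$ terms, leaving
\begin{equation*}
\frac{1}{2}\Delta\left(|A_0|^2 + \frac{m^2}{2}h^2\right) = -|\nabla A_0|^2 + m^2|\grad h|^2 - \frac{1}{2}\sum_{i,j=1}^m(\lambda_i - \lambda_j)^2 R_{ijij}.
\end{equation*}

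To finish, I would invoke Lemma \ref{th:JHChenInequalityPNMC} in the equivalent form $m^2|\grad h|^2 \leq \frac{4(m-1)}{m+26}|\nabla A_0|^2$; combining with the $-|\nabla A_0|^2$ already present yields the coefficient $\frac{-(m+26)+4(m-1)}{m+26} = \frac{3(m-10)}{m+26}$ in front of $|\nabla A_0|^2$, which is exactly the stated bound. I do not expect any serious obstacle, since the computation is completely parallel to the hypersurface derivation of \eqref{eq:MainInequality}; the only step requiring care is the sign bookkeeping in the expression for $\langle A_0, \Hess h\rangle$, which is what makes the $h\Delta h$ contribution cancel against $\frac{1}{2}\Delta\bigl(\frac{m^2}{2}h^2\bigr)$ and produces a statement as clean as the one asserted.
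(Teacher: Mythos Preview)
Your proposal is correct and follows exactly the route the paper indicates: the paper itself gives no detailed proof but simply states that the inequality follows from Lemma~\ref{th:JHChenInequalityPNMC}, \eqref{eq:GeneralEquationYau}, and \eqref{eq:BiconservativityCharacterizationPNMC}, which are precisely the three ingredients you combine. Your intermediate identity $\langle A_0,\Hess h\rangle=\Div(A_0(\grad h))-\langle\Div A_0,\grad h\rangle$ is the PNMC analogue of how \eqref{eq:GeneralFormulaSubmanifolds} is obtained from \eqref{eq:GeneralEquationYau}, and the sign bookkeeping (with the paper's convention $\Delta=-\trace\Hess$) is handled correctly throughout.
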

	
	Now, following the steps of the second proof \ref{subsec:SecondProof}, we obtain an alternative proof of 
	\begin{theorem}[\cite{FetcuLoubeauOniciuc2021}] \label{th:FetcuLoubeauOniciuc2021}
		Let $\varphi : M^m \to N^n (c)$ be a compact PNMC biconservative submanifold in a space form with $\Riem ^M \geq 0$ and $m \leq 10$. Then $M$ has parallel mean curvature vector field and $\nabla A_H = 0$.
	\end{theorem}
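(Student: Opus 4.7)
The plan is to transcribe the argument of Subsection \ref{subsec:SecondProof} to the PNMC setting, working throughout with the shape operator $A_0$ in the direction of $\eta_0 = H/|H|$ and with $h = |H|$ in place of $f$. The PNMC lemmas established just above mirror exactly the hypersurface lemmas used there (the J.~H.~Chen-type inequality for $|\nabla A_0|^2$, the inequality for $\frac{1}{2}\Delta(|A_0|^2 + m^2 h^2/2)$, and the PNMC analogue of Lemma \ref{th:LemmaBiconservativeHypersurfacesConnectionForms}), so the pointwise manipulations go through verbatim.

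Concretely, I would first use the last displayed lemma: since $m \leq 10$ and $\Riem^M \geq 0$, it gives $\frac{1}{2}\Delta(|A_0|^2 + m^2 h^2/2) \leq 0$. Compactness of $M$ then forces $|A_0|^2 + m^2 h^2/2$ to be constant, and, using $\Riem^M \geq 0$ once more, it forces $(\lambda_i - \lambda_j)^2 R_{ijij} = 0$ for every $i, j$.

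Next I would prove $h$ is constant by arguing on a connected component of the open dense set $M_{A_0}$ and assuming for contradiction that $\grad h \neq 0$ on an open subset. By item \ref{eq:aPNMC} of the PNMC lemma, $\lambda_1 = -mh/2$ has multiplicity one and $E_1 = \grad h/|\grad h|$; by item \ref{eq:gPNMC} combined with $R_{1a1a} = 0$, one obtains $(\lambda_1 - \lambda_a) E_1(E_1(\lambda_a)) = E_1(\lambda_1) E_1(\lambda_a) - 2(E_1(\lambda_a))^2$ for every $a \in \overline{2,m}$. The constancy of $|A_0|^2 + m^2 h^2/2$ together with $\trace A_0 = mh$ yields $\sum_{a=2}^m (\lambda_1 - \lambda_a) E_1(\lambda_a) = 0$, and differentiating this along $E_1$ and substituting the previous relation produces $\frac{m^2}{2}|\grad h|^2 + \sum_{a=2}^m (E_1(\lambda_a))^2 = 0$, contradicting $\grad h \neq 0$. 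Density of $M_{A_0}$ then gives $h$ constant on all of $M$.

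With $h$ constant, parallelism of $H = h\eta_0$ in the normal bundle follows at once from the PNMC hypothesis $\nabla^\perp \eta_0 = 0$, giving the first half of the conclusion. For $\nabla A_H = 0$, apply \eqref{eq:GeneralEquationYau} to $S = A_0$: since $\trace A_0 = mh$ is now constant, $\Hess(\trace A_0) = 0$; since $|A_0|^2$ is constant, $\Delta|A_0|^2 = 0$; combined with $(\lambda_i - \lambda_j)^2 R_{ijij} = 0$, one reads off $|\nabla A_0|^2 = 0$, so $\nabla A_0 = 0$ and hence $\nabla A_H = h \nabla A_0 = 0$. The main obstacle here is conceptual rather than computational: one must verify that every step of the hypersurface proof depends only on features that the PNMC lemma reproduces for $A_0$ (symmetry of $\nabla A_0$, the identity $\trace A_0 = mh$, and the connection-form identities \ref{eq:aPNMC}--\ref{eq:gPNMC}), which is why PNMC, rather than mere nonvanishing of $H$, is needed.
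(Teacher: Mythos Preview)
Your proposal is correct and follows exactly the approach the paper indicates: the paper's proof consists solely of the sentence ``following the steps of the second proof \ref{subsec:SecondProof}'', and you have faithfully transcribed that argument to the PNMC setting using the PNMC analogues of the relevant lemmas. Your final paragraph, deriving $\nabla A_0 = 0$ from \eqref{eq:GeneralEquationYau} once $h$ and $|A_0|^2$ are known to be constant, is a clean and correct way to finish the step that the paper leaves implicit.
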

	
	\subsection{Generalizations of Theorem \ref{th:FetcuOniciuc2022}}
	
	First we present a generalization of Theorem \ref{th:FetcuOniciuc2022} replacing the hypothesis $\Riem ^M \geq 0$ with some estimates of $|A|^2$ is terms of the mean curvature function $f$.
	
	\begin{theorem} \label{th:ClassificationHypersurfacesInequalityMeanCurvature}
		Let $\varphi : M^m \to N^{m+1} (c)$ be a compact non-minimal biconservative hypersurface. If $m \leq 10$, $c + f^2 \geq 0$ when $c < 0$ and 
		\begin{equation} \label{eq:hypothesisMeanCurvature}
			mf^2 \leq |A|^2 \leq mc + \frac {m^3} {2(m-1)} f^2 - \frac {m(m-2)} {2(m-1)} \sqrt { m^2 f^4 + 4 (m-1) cf^2},
		\end{equation}
		then $\nabla A = 0$. Moreover, either $|A|^2 = mf^2$ on $M$, i.e. $M$ is umbilical, or we have equality in the second inequality of \eqref{eq:hypothesisMeanCurvature}. More precisely, we have
		\begin{enumerate}[label = \alph*)]
			\item if $c = -1$, then $\varphi (M)$ is a hypersphere of $\mathbb S^m (r)$ of radius $r > 0$, 
			\item if $c = 0$, then $\varphi (M)$ is a hypersphere $\mathbb S^m (r)$
			\item if $c = 1$, then $\varphi (M)$ is either a small hypersphere $\mathbb S^m (r)$ of $\mathbb S^{m+1}$, or the standard product $\mathbb S^1 (r_1) \times \mathbb S^{m-1} (r_2)$, where $r_1^2 + r_2^2 = 1$ and $r_1 > 1 / {\sqrt m}$.			
		\end{enumerate}
	\end{theorem}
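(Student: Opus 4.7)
The plan is to exploit the main inequality \eqref{eq:MainInequality} in a manner parallel to the first alternative proof of Theorem \ref{th:FetcuOniciuc2022}, with the condition $\Riem^M\geq 0$ replaced by a direct algebraic lower bound on the curvature sum $\Sigma:=\sum_{i,j=1}^{m}(\lambda_i-\lambda_j)^2 R_{ijij}$ obtained via Okumura's Lemma \ref{th:OkumuraInequality}. Since $m\leq 10$ makes the coefficient $3(m-10)/(m+26)$ non-positive, provided $\Sigma\geq 0$ the right-hand side of \eqref{eq:MainInequality} is non-positive; integrating on compact $M$ then forces it to vanish pointwise and makes $|A|^2+\tfrac{m^2}{2}f^2$ constant.

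The analytic core is the lower bound on $\Sigma$. Setting $P:=|A|^2-mf^2$ (non-negative by Cauchy--Schwarz, or equivalently by the first inequality of \eqref{eq:hypothesisMeanCurvature}) and $b_i:=\lambda_i-f$, so that $\sum b_i=0$ and $\sum b_i^{\,2}=P$, the Gauss equation $R_{ijij}=c+\lambda_i\lambda_j$ for distinct $i,j$ combined with a routine expansion gives
\begin{equation*}
\Sigma = 2mcP + 2mf\sum_{i=1}^m b_i^{\,3} + 2mf^2 P - 2P^2.
\end{equation*}
Applying Okumura's lemma in the direction suited to the sign of $f$ yields $f\sum b_i^{\,3}\geq -\frac{|f|(m-2)}{\sqrt{m(m-1)}}P^{3/2}$, hence
\begin{equation*}
\Sigma \geq 2P\left[mc+mf^2-P-\frac{m|f|(m-2)}{\sqrt{m(m-1)}}\sqrt{P}\,\right].
\end{equation*}
Viewing the bracket as a quadratic in $\sqrt{P}$, the hypothesis $c+f^2\geq 0$ (when $c<0$) keeps the discriminant non-negative and produces a non-negative upper root $t_+$. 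A direct computation of $mf^2+t_+^{\,2}$ (expand, then pull $|f|$ inside the square root) recognizes the condition $\sqrt{P}\leq t_+$ as \emph{exactly} the second inequality of \eqref{eq:hypothesisMeanCurvature}; hence $\Sigma\geq 0$, as required.

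Once $\Sigma\equiv 0$ and $|A|^2+\tfrac{m^2}{2}f^2$ is constant, the pointwise identity $\tfrac{3(m-10)}{m+26}|\nabla A|^2=\tfrac12\Sigma=0$ immediately gives $\nabla A=0$ when $m<10$. For the borderline $m=10$ I proceed pointwise: where $P=0$ the hypersurface is umbilical and biconservativity $A(\grad f)=-\tfrac{m}{2}f\grad f$ together with $A=f\,\id$ forces $(1+\tfrac{m}{2})f\grad f=0$, so $f$ is constant; where $P>0$, equality in Okumura forces exactly two distinct principal curvatures of multiplicities $1$ and $m-1$, which by Lemma \ref{th:LemmaBiconservativeHypersurfacesConnectionForms}\ref{eq:a} must be $\lambda_1=-\tfrac{m}{2}f$ and $\lambda_2=\tfrac{3m}{2(m-1)}f$, so $|A|^2+\tfrac{m^2}{2}f^2$ reduces to a positive constant multiple of $f^2$ and $f$ is again constant. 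With $\grad f=0$ and $|A|^2$ constant, substitution into \eqref{eq:GeneralFormulaSubmanifolds} yields $|\nabla A|^2=0$. The classification then follows from the standard description of compact hypersurfaces with $\nabla A=0$ in space forms: umbilical ones are hyperspheres $\mathbb S^m(r)$; non-umbilical ones with two constant principal curvatures of multiplicities $1$ and $m-1$ exist only for $c=1$, and biconservativity together with the ratio $\lambda_1/\lambda_2=-(m-1)/3$ forces $\varphi(M)=\mathbb S^1(r_1)\times\mathbb S^{m-1}(r_2)$ with $r_1^{\,2}=3/(m+2)>1/m$.

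The main obstacle is the algebraic identification of the Okumura-derived quadratic inequality in $\sqrt{P}$ with the precise upper bound in \eqref{eq:hypothesisMeanCurvature}: one must carefully expand $t_+^{\,2}$ and rationalize to see that $|A|^2\leq mf^2+t_+^{\,2}$ coincides with the theorem's bound. A secondary subtlety is the borderline $m=10$, where $\nabla A=0$ does not follow from the coefficient argument and must instead be extracted from the two-principal-curvature structure produced by equality in Okumura's lemma.
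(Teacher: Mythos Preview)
Your plan coincides with the paper's up to the point where you derive
\[
-\tfrac12\Sigma\;\le\;\bigl(|A|^2-mf^2\bigr)\Bigl[|A|^2-mc-2mf^2+\tfrac{m(m-2)}{\sqrt{m(m-1)}}|f|\sqrt{|A|^2-mf^2}\Bigr]
\]
via Okumura and identify the bracket's non-positivity with the second inequality of \eqref{eq:hypothesisMeanCurvature}. Your shortcut for $m<10$ (reading off $\nabla A=0$ directly from the strict sign of $3(m-10)/(m+26)$) is a genuine simplification over the paper, which treats all $m\le 10$ uniformly by first forcing $f$ constant and then invoking the equality case of \eqref{eq:JHChenInequality}. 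Your $m=10$ route via the constancy of $|A|^2+\tfrac{m^2}{2}f^2$ (instead of the paper's polynomial relation coming from equality in the second bound of \eqref{eq:hypothesisMeanCurvature}) is also sound once you set it up on $M_A$ and phrase the two cases as a contradiction argument on an open set where $\grad f\neq 0$; your pointwise phrasing should be tightened accordingly.

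There is, however, a genuine error in your final classification. Once you have shown that $f$ is constant, the biconservativity condition $A(\grad f)=-\tfrac m2 f\grad f$ becomes vacuous, and Lemma \ref{th:LemmaBiconservativeHypersurfacesConnectionForms}\ref{eq:a} (which you invoke to get $\lambda_1=-\tfrac m2 f$) requires $\grad f\neq 0$. Hence you are \emph{not} entitled to the ratio $\lambda_1/\lambda_2=-(m-1)/3$, and your conclusion $r_1^{\,2}=3/(m+2)$ does not follow. The only surviving constraint in the non-umbilical case is the equality in the second inequality of \eqref{eq:hypothesisMeanCurvature}; the paper checks this directly for $\mathbb S^1(r_1)\times\mathbb S^{m-1}(r_2)$ and finds that it reduces to $(m-1)^2r_1^2/r_2^2-r_2^2/r_1^2\ge 0$, i.e.\ $r_1\ge 1/\sqrt m$, with the minimal value excluded by non-minimality. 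So the correct output is the \emph{range} $r_1>1/\sqrt m$, not a single radius. Replace your last classification step by this verification.
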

	
	\begin{proof}
		Let $p \in M$ be an arbitrary point and let $\{ e_i \}_{i \in \overline {1, m}}$ be an orthonormal basis in $T_p M$ such that $A (e_i) = \lambda_i e_i$, for any $i \in \overline {1, m}$. From the Gauss equation we have
		\begin{align*}
			\frac 12 \sum _{i, j = 1} ^m (\lambda_i - \lambda_j)^2 R_{ijij} =& \frac 12 \sum _{i, j = 1} ^m \left ( \lambda_i^2 - 2 \lambda_i \lambda_j + \lambda_j^2 \right ) (c + \lambda_i \lambda_j)\\
			=& \frac 12 \sum _{i, j = 1} ^m \left ( c\lambda_i^2 + \lambda_i^3 \lambda_j - 2c \lambda_i \lambda_j - 2 \lambda_i^2 \lambda_j^2 + c\lambda_j^2 + \lambda_i \lambda_j^3 \right )\\
			=& \frac 1 2 \sum _{j = 1} ^m \left ( c |A|^2 + \lambda_j \trace A^3 - 2mcf \lambda_j - 2 \lambda_j^2 |A|^2 + mc \lambda_j^2 + mf \lambda_j^3 \right ) \\
			=& \frac 1 2 \left ( mc |A|^2 + mf \trace A^3 - 2m^2c f^2 - 2|A|^4 + mc |A|^2 + mf \trace A^3 \right ) \\
			=& mc |A|^2 - m^2 c f^2 - |A|^4 + mf \trace A^3.
		\end{align*}
		Now, consider $\mu_i = \lambda_i - f$. We have
		\begin{equation*}
			\sum _{i = 1} ^m \mu_i = 0, \quad \sum _{i = 1} ^m \mu_i ^2 = |A|^2 - mf^2 \quad \text{and} \quad \trace A^3 = \sum _{i = 1} ^m \mu_i^3 + 3f |A|^2 - 2mf^3.
		\end{equation*}
		Thus 
		\begin{equation} \label{eq:intermediaryFormOfSumWithRijij}
			\frac 1 2 \sum _{i, j = 1} ^m (\lambda_i - \lambda_j)^2 R_{ijij} = \left ( |A|^2 - m f^2 \right ) \left ( mc - |A|^2 + 2mf^2 \right ) + mf \sum _{i = 1} ^m \mu_i ^3.
		\end{equation}
		Since $\sum _{i=1} ^m \mu_i = 0$, we can apply Lemma \ref{th:OkumuraInequality} and obtain
		\begin{equation} \label{eq:OkumuraInequalityMu}
			\left | \sum _{i=1} ^m \mu_i^3 \right | \leq \frac {m-2} {\sqrt {m (m-1)}} \left ( \sum _{i=1} ^m \mu_i^2 \right )^{\frac 3 2}.
		\end{equation}
		Therefore, equation \eqref{eq:intermediaryFormOfSumWithRijij} implies that 
		\begin{align*}
			\frac 1 2 \sum _{i, j = 1} ^m (\lambda_i - \lambda_j)^2 R_{ijij} \geq& \left ( |A|^2 - mf^2 \right ) \left ( mc - |A|^2 + 2mf^2 \right ) - m |f| \left | \sum _{i = 1} ^m \mu_i ^3 \right |\\
			\geq& \left ( |A|^2 - mf^2 \right ) \left ( mc - |A|^2 + 2mf^2 \right ) \\
			& - \frac {m (m - 2)} {\sqrt {m (m - 1)}} |f| \left ( |A|^2 - mf^2 \right ) \sqrt {|A|^2 - mf^2}
		\end{align*}
		and we conclude that
		\begin{equation} \label{eq:inequalitySumWithRijij}
			- \frac 12 \sum _{i, j = 1} ^m (\lambda_i - \lambda_j)^2 R_{ijij} \leq \left (|A|^2 - mf^2 \right ) \left ( |A|^2 - mc - 2mf^2 + \frac {m (m - 2)} {\sqrt {m (m - 1)}} |f| \sqrt {|A|^2 - mf^2} \right )
		\end{equation}
		Further,
		\begin{align}
			& |A|^2 - mc - 2mf^2 + \frac {m (m - 2)} {\sqrt {m (m - 1)}} |f| \sqrt {|A|^2 - mf^2} \label{eq:SecondParanthesisWithSquare} \\
			=& \left ( |A|^2 - mf^2 + 2 \frac {m (m - 2)} {2 \sqrt {m (m - 1)}} |f| \sqrt {|A|^2 - mf^2} + \frac {m^2 (m - 2)^2} {4 m (m - 1)} f^2 \right ) \notag \\
			& - \frac {m^2 (m - 2)^2} {m (m - 1)} f^2 - mc - mf^2 \notag \\
			=& \left ( \sqrt {|A|^2 - mf^2} + \frac {m (m - 2)} {2 \sqrt {m (m - 1)}} |f| \right )^2 - m \left ( c + \frac {m^2} {4(m - 1)} f^2 \right ) \notag
		\end{align}
		Using \eqref{eq:MainInequality}, \eqref{eq:inequalitySumWithRijij} and \eqref{eq:SecondParanthesisWithSquare}, we obtain 
		\begin{align}
			\frac 1 2 \Delta \left ( |A|^2 + \frac {m^2} 2 f^2 \right ) \leq& \frac {3 (m - 10)} {m + 26} |\nabla A|^2 + \left (|A|^2 - mf^2 \right ) \left [ \left ( \sqrt {|A|^2 - mf^2} + \frac {m (m - 2)} {2 \sqrt {m (m - 1)}} |f| \right )^2 \right .  \label{eq:inequalityLaplacianMeancurvature} \\
			&  \left . - m \left ( c + \frac {m^2} {4(m - 1)} f^2 \right ) \right ]. \notag
		\end{align}
		Now we prove that
		\begin{equation} \label{eq:intermediaryInequalityMeanCurvature}
			\left ( \sqrt {|A|^2 - mf^2} + \frac {m (m - 2)} {2 \sqrt {m (m - 1)}} |f| \right )^2 - m \left ( c + \frac {m^2} {4(m - 1)} f^2 \right ) \leq 0.
		\end{equation}
		is equivalent to the second inequality of \eqref{eq:hypothesisMeanCurvature}. Indeed, since $c + f^2 \geq 0$, we obtain that $c + (m^2 f^2) / (4 (m - 1)) \geq 0$ and \eqref{eq:intermediaryInequalityMeanCurvature} is equivalent to 
		\begin{align*}
			& \sqrt {|A|^2 - mf^2} + \frac {m (m - 2)} {2 \sqrt {m (m - 1)}} |f| \leq \sqrt {mc + \frac {m^3} {4 (m - 1)} f^2}\\
			\Leftrightarrow& \sqrt {|A|^2 - mf^2} \leq \sqrt {mc + \frac {m^3} {4 (m - 1)} f^2} - \frac {m (m - 2)} {2 \sqrt {m (m - 1)}} |f|
		\end{align*}
		The fact that $c + f^2 \geq 0$, implies that the right-hand side is non-negative and the inequality is equivalent to 
		\begin{align*}
			|A|^2 - mf^2 \leq& \left ( \sqrt {mc + \frac {m^3} {4 (m - 1)} f^2} - \frac {m (m - 2)} {2 \sqrt {m (m - 1)}} |f| \right )^2\\
			=& mc + \frac {m^3} {4 (m - 1)} f^2 - \frac {m (m - 2)} {\sqrt {m (m - 1)}} |f| \sqrt {mc + \frac {m^3} {4 (m - 1)} f^2} + \frac {m^2 (m - 2)^2} {4 m (m - 1)} f^2,						 
		\end{align*}
		which represents the second inequality of \eqref{eq:hypothesisMeanCurvature}.
		
		Now, using \eqref{eq:inequalityLaplacianMeancurvature} and \eqref{eq:intermediaryInequalityMeanCurvature} we obtain that
		\begin{equation*}
			\frac 1 2 \Delta \left ( |A|^2 + \frac {m^2} 2 f^2 \right ) \leq 0.
		\end{equation*}
		Since $M$ is compact, we obtain that 
		\begin{equation*}
			\frac 1 2 \Delta \left ( |A|^2 + \frac {m^2} 2 f^2 \right ) = 0.
		\end{equation*}
		Therefore, on $M$, we have equality in \eqref{eq:MainInequality} which means equality in \eqref{eq:JHChenInequality}, in either the first or the second inequality of \eqref{eq:hypothesisMeanCurvature}, and in the Okumura's inequality \eqref{eq:OkumuraInequalityMu}.
		
		We note that, in fact, inequality \eqref{eq:OkumuraInequalityMu} is multiplied by $|f|$, and thus we must have equality in the Okumura's inequality on the set $W$ of all points where $f$ does not vanish. For the simplicity of notation, we can assume that $f$ does not vanish at any point of $M$. Otherwise, we can work on (a connected component of) $W$ and prove that $\grad f = 0$. Then it follows that $\grad f = 0$ on $M$.
		
		From the equality in \eqref{eq:JHChenInequality}, on $M$, we obtain 
		\begin{equation} \label{eq:equalityJHChen}
			|\nabla A|^2 = \frac {m^2 (m + 26)} {4 (m - 1)} |\grad f|^2.
		\end{equation}
		From equality in Okumura's inequality, we obtain that at any point of $M$ either 
		$$
		\mu_1 = \ldots = \mu_m = 0,
		$$
		or
		$$
		\mu_1 \neq \mu_2 = \ldots = \mu_m.
		$$
		Using the fact that $\mu_i = \lambda_i - f$, for any $i \in \overline {1, m}$, we obtain that at any point either
		$$
		\lambda_1 = \ldots = \lambda_m = f,
		$$
		or 
		$$
		\lambda_1 \neq \lambda_2 = \ldots = \lambda_m,
		$$
		which means that at any point of $M$ we have one or two distinct principal curvatures. Thus, on any connected component $U$ of $M_A$, we either have one distinct principal curvature, or two distinct principal curvatures.
		
		From the equality in \eqref{eq:hypothesisMeanCurvature}, we obtain that at any point of $M$ either
		$$
		|A|^2 = mf^2,
		$$
		or
		$$
		|A|^2 = mc + \frac {m^3} {2 (m - 1)} f^2 - \frac {m (m - 2)} {2 (m - 1)} \sqrt {m^2 f^4 + 4 (m - 1) cf^2}.
		$$
		On $U$, if we have one distinct principal curvature, then $f$ is constant, i.e. $\grad f = 0$ on $U$.
		
		Suppose now that we have two distinct principal curvatures on $U$. Assume by way of contradiction that $\grad f \neq 0$ at any point of an open subset $V$ of $U$.
		
		Since $M$ is biconservative, we can assume that $\lambda_1 = - mf / 2$. Now, using the fact that $\trace A = mf$, we obtain that $\lambda_2 = (3m f) / (2 (m - 1))$. Thus 
		$$
		|A|^2 = \frac {m^2 (m + 8)} {4 (m - 1)} f^2.
		$$
		Since $|A|^2 = m f^2$ is equivalent to the fact that $V$ has one distinct principal curvature at any point, we obtain that
		\begin{align*}
			& |A|^2 = mc + \frac {m^3} {2 (m - 1)} f^2 - \frac {m (m - 2)} {2 (m - 1)} \sqrt {m^2 f^4 + 4 (m - 1) c f^2}\\
			\Leftrightarrow& \frac {(m - 2)} {2 (m - 1)} \sqrt {m^2 f^4 + 4 (m - 1) c f^2} = c + \frac {m (m - 8)} {2 (m - 1)} f^2. 
		\end{align*}
		If $c + (m (m - 8)) / (2 (m - 1)) f^2 < 0$, then we have a contradiction, since the left-hand side is non-negative.
		
		If $c + (m (m - 8)) / (2 (m - 1)) f^2 \geq 0$, then, by squaring, we obtain a polynomial equation with constant coefficients in the variable $f$. Thus $f$ is constant, which contradicts $\grad f \neq 0$ at any point of $V$.
		
		Therefore, $f$ must be constant on $U$, thus on $M_A$. Since $M_A$ is dense in $M$, we obtain that $\grad f = 0$ on $M$, i.e. $f$ is constant on $M$.
		
		Using \eqref{eq:equalityJHChen}, we obtain that $\nabla A = 0$ on $M$, thus $M$ is either umbilical, or it has two distinct principal curvatures.
		
		In order to obtain the explicit classification of our biconservative hypersurfaces, it is enough to consider only the cases $c = -1$, $c = 0$ and $c = 1$.
		
		Consider $c = -1$. Since $M$ is compact, the only solution to our problem must be umbilical and, moreover, $\varphi (M)$ is a hypersphere $\mathbb S^m (r)$ of radius $r > 0$. We note that the hypothesis $c + f^2 \geq 0$ does not give any restriction for the radius $r$.
		
		Now, we consider $c = 0$. Again, since $M$ is compact, the only solution must be umbilical and $\varphi (M)$ is a hypersphere $\mathbb S^m (r)$.
		
		Finally, consider $c = 1$. If $M$ is umbilical, then $\varphi (M)$ is a small hypersphere $\mathbb S^m (r)$ of $\mathbb S^{m+1}$.
		
		If $M$ has two distinct principal curvatures, then $\varphi (M)$ is the standard product $\mathbb S^1 (r_1) \times \mathbb S^{m - 1} (r_2)$, where $r_1^2 + r_2^2 = 1$. Considering 
		$$
		\eta_p = \left ( \frac {r_2} {r_1} p_1, - \frac {r_1} {r_2} p_2 \right ),
		$$
		where $p = (p_1, p_2) \in \mathbb S^1 (r_1) \times \mathbb S^{m - 1} (r_2)$, we obtain 
		$$
		\lambda_1 = - \frac {r_2} {r_1} \quad \text{and} \quad \lambda_2 = \frac {r_1} {r_2}.
		$$
		Thus 
		$$
		|A|^2 = \frac {r_2^2} {r_1^2} + (m - 1) \frac {r_1^2} {r_2^2}
		$$
		and 
		$$
		f^2 = \frac 1 {m^2} \left ( (m - 1)^2 \frac {r_1^2} {r_2^2} - 2 (m - 1) + \frac {r_2^2} {r_1^2} \right ).
		$$
		We know that 
		\begin{align*}
			& |A|^2 = mc + \frac {m^3} {2 (m - 1)} f^2 - \frac {m (m - 2)} {2 (m - 1)} \sqrt {m^2 f^4 + 4 (m - 1) cf^2} \\
			\Leftrightarrow& \frac {r_2^2} {r_1^2} + (m - 1) \frac {r_1^2} {r_2^2} = m + \frac m {2 (m - 1)} \left ( (m - 1)^2 \frac {r_1^2} {r_2^2} - 2 (m - 1) + \frac {r_2^2} {r_1^2} \right ) \\
			& - \frac {m - 2} {2 (m - 1)} \left | (m - 1) \frac {r_1} {r_2} - \frac {r_2} {r_1} \right | \sqrt {(m - 1)^2 \frac {r_1^2} {r_2^2} - 2 (m - 1) + \frac {r_2^2} {r_1^2} + 4 (m - 1)} \\
			\Leftrightarrow& (m - 1)^2 \frac {r_1^2} {r_2^2} - \frac {r_2^2} {r_1^2} = \left |(m - 1)^2 \frac {r_1^2} {r_2^2} - \frac {r_2^2} {r_1^2} \right |
		\end{align*}
		Thus 
		\begin{equation*}
			(m - 1)^2 \frac {r_1^2} {r_2^2} - \frac {r_2^2} {r_1^2} \geq 0 \Leftrightarrow r_1 \geq \frac 1 {\sqrt m}.
		\end{equation*}
		Since $M$ is not minimal, we conclude that $\varphi (M)$ is $\mathbb S^1 (r_1) \times \mathbb S^{m-1} (r_2)$, where $r_1^2 + r_2^2 = 1$ and $r_1 > 1 / \sqrt m$.
	\end{proof}	
	
	The following result is similar to Theorem \ref{th:ClassificationHypersurfacesInequalityMeanCurvature}, but the estimates of $|A|^2$ are in terms of the normalized scalar curvature. Since the proof is similar to that of Theorem \ref{th:ClassificationHypersurfacesInequalityMeanCurvature}, we underline only its key points.
	
	\begin{theorem} \label{th:ClassificationHypersurfacesInequalityScalarCurvature}
		Let $\varphi : M^m \to N^{m+1} (c)$ be a compact non-minimal biconservative hypersurface. If $3 \leq m \leq 10$, $m \overline R + 2c > 0$, 
		\begin{equation*}
			\overline R \geq \frac {m-2} {2m(m-1)} |A|^2 - \frac m {2(m-1)} c
		\end{equation*}
		and 
		\begin{equation} \label{eq:hypothesisScalarCurvature}
			m \overline R \leq |A|^2 \leq \frac m {(m - 2) (m \overline R + 2c)} \left ( mc^2 + 4(m - 1) c \overline R + m (m - 1) \overline R^2 \right ),
		\end{equation}
		then $\nabla A = 0$. Moreover, either $|A|^2 = m \overline R$ on $M$, i.e. $M$ is umbilical, or we have equality in the second inequality of \eqref{eq:hypothesisScalarCurvature}. More precisely, we have
		\begin{enumerate}[label = \alph*)]
			\item if $c = -1$, then $\varphi (M)$ is a hypersphere of $\mathbb S^m (r)$ of radius $r > 0$, 
			\item if $c = 0$, then $\varphi (M)$ is a hypersphere $\mathbb S^m (r)$
			\item if $c = 1$, then $\varphi (M)$ is either a small hypersphere $\mathbb S^m (r)$ of $\mathbb S^{m+1}$, or the standard product $\mathbb S^1 (r_1) \times \mathbb S^{m-1} (r_2)$, where $r_1^2 + r_2^2 = 1$ and $r_1 > 1 / {\sqrt m}$.			
		\end{enumerate}
	\end{theorem}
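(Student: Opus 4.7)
The plan is to closely mirror the proof of Theorem \ref{th:ClassificationHypersurfacesInequalityMeanCurvature}, trading the mean curvature $f$ for the normalized scalar curvature $\overline R$ via the identity $m^2 f^2 = |A|^2 + m(m-1)\overline R$ coming from \eqref{eq:formulaNormalizedScalarCurvature}. In particular, the quantities appearing in the previous proof admit the rewrites $|A|^2 - mf^2 = \tfrac{m-1}{m}(|A|^2 - m\overline R)$ and $mc - |A|^2 + 2mf^2 = mc + \tfrac{2-m}{m}|A|^2 + 2(m-1)\overline R$, so that the key identity
\[
\tfrac12\sum_{i,j=1}^m(\lambda_i-\lambda_j)^2 R_{ijij} = (|A|^2 - mf^2)(mc - |A|^2 + 2mf^2) + mf\sum_{i=1}^m\mu_i^3,
\]
already established in the proof of Theorem \ref{th:ClassificationHypersurfacesInequalityMeanCurvature}, becomes an expression purely in terms of $|A|^2$, $\overline R$ and $c$ (modulo the sign of $f$, which eventually drops out).

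I would then apply the Okumura Lemma \ref{th:OkumuraInequality} to $\mu_i = \lambda_i - f$ exactly as in the previous proof, obtaining a lower bound for $\tfrac12\sum(\lambda_i-\lambda_j)^2 R_{ijij}$ and, after substitution into \eqref{eq:MainInequality}, an estimate of the shape
\[
\tfrac12\Delta\!\left(|A|^2+\tfrac{m^2}{2}f^2\right) \leq \tfrac{3(m-10)}{m+26}|\nabla A|^2 + (|A|^2 - m\overline R)\cdot Q,
\]
where $Q$ is a scalar function of $|A|^2$, $\overline R$, $c$. The whole point of the hypotheses in \eqref{eq:hypothesisScalarCurvature} is to make $Q \leq 0$ after a completion-of-square maneuver analogous to \eqref{eq:SecondParanthesisWithSquare}: the condition $m\overline R + 2c > 0$ guarantees that the denominator appearing in the upper bound of \eqref{eq:hypothesisScalarCurvature} has a definite sign, while the auxiliary lower bound $\overline R \geq \tfrac{m-2}{2m(m-1)}|A|^2 - \tfrac{m}{2(m-1)}c$ plays exactly the role that $c+f^2 \geq 0$ did previously, keeping the radicand non-negative at the step where the square root must be unwound. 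Once $Q \leq 0$, the bound $m \leq 10$ makes the $|\nabla A|^2$ term nonpositive, so compactness of $M$ forces $\tfrac12\Delta(|A|^2+\tfrac{m^2}{2}f^2) \equiv 0$ and thus equalities everywhere: in \eqref{eq:JHChenInequality}, in one of the two inequalities of \eqref{eq:hypothesisScalarCurvature}, and in Okumura's lemma on $\{f \neq 0\}$.

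From equality in Okumura's lemma the shape operator has at most two distinct eigenvalues pointwise; on an umbilical component one reads off $|A|^2 = m\overline R$, while on a component with two distinct principal curvatures the biconservativity characterization \eqref{eq:BiconservativityCharacterization} together with $\trace A = mf$ yields $\lambda_1 = -\tfrac{m}{2}f$, $\lambda_2 = \tfrac{3m}{2(m-1)}f$ and $|A|^2 = \tfrac{m^2(m+8)}{4(m-1)}f^2$. Feeding this into the attained upper equality of \eqref{eq:hypothesisScalarCurvature} and eliminating $\overline R$ via $m(m-1)\overline R = m^2 f^2 - |A|^2$ produces a polynomial identity in $f$ with constant coefficients, forcing $f$ to be constant and hence $\nabla A = 0$ by the equality form of \eqref{eq:JHChenInequality}. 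The case-by-case list of models for $c \in \{-1, 0, 1\}$ then follows verbatim from the final step of the proof of Theorem \ref{th:ClassificationHypersurfacesInequalityMeanCurvature}. The main obstacle I expect is purely algebraic: verifying that the completion-of-square step produces precisely the upper bound in \eqref{eq:hypothesisScalarCurvature} and confirming that the auxiliary hypothesis $\overline R \geq \tfrac{m-2}{2m(m-1)}|A|^2 - \tfrac{m}{2(m-1)}c$ is exactly the sign condition demanded by the squaring manipulation, with no hidden degeneracies when $m = 3$ (hence the assumption $m \geq 3$ rather than $m \geq 2$).
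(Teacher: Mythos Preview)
Your proposal is correct and follows essentially the same approach as the paper's own proof: rewrite \eqref{eq:inequalityLaplacianMeancurvature} in terms of $|A|^2$ and $\overline R$ via \eqref{eq:formulaNormalizedScalarCurvature}, use the hypotheses to force the right-hand side nonpositive, apply compactness to obtain equality throughout, and then extract the eigenvalue structure from the Okumura equality case. The only cosmetic difference is in the final constancy step on a component with two distinct principal curvatures: the paper eliminates $|A|^2$ between the attained upper equality in \eqref{eq:hypothesisScalarCurvature} and the relation $\tfrac{3(m-4)}{m-1}|A|^2 = m(m+8)\overline R$ to obtain a non-trivial polynomial in $\overline R$ (hence $\overline R$, then $|A|^2$, then $f$ are constant), whereas you substitute $|A|^2 = \tfrac{m^2(m+8)}{4(m-1)}f^2$ and $\overline R = \tfrac{3m(m-4)}{4(m-1)^2}f^2$ directly to get a polynomial in $f$; these are equivalent manipulations. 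One small point worth making explicit in your write-up: the identification $\lambda_1 = -\tfrac{m}{2}f$ from \eqref{eq:BiconservativityCharacterization} requires $\grad f \neq 0$, so the argument on the two-eigenvalue component should be framed as a contradiction assuming $\grad f \neq 0$ on an open subset, exactly as in the paper.
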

	
	\begin{proof}
		From the proof of Theorem \ref{th:ClassificationHypersurfacesInequalityMeanCurvature}, we have formula \eqref{eq:inequalityLaplacianMeancurvature}. Taking into account \eqref{eq:formulaNormalizedScalarCurvature} and replacing in \eqref{eq:inequalityLaplacianMeancurvature}, we get
		\begin{align*}
			\frac 3 4 \Delta \left ( |A|^2 + \frac {m(m - 1)} 3 \overline R \right ) \leq& \frac {3(m - 10)} {m + 26} |\nabla A|^2 + \frac {m - 1} m \left ( |A|^2 - m \overline R \right ) \left ( \frac {m - 2} m |A|^2 - mc \right . \\
			& \left . - 2 (m - 1) \overline R + (m - 2) \sqrt {\frac 1 {m^2} |A|^4 + \frac {m - 2} m \overline R |A|^2 - (m - 1) \overline R^2} \right )
		\end{align*}
		Following the same steps as in the proof of Theorem \ref{th:ClassificationHypersurfacesInequalityMeanCurvature}, we obtain equality in \eqref{eq:JHChenInequality} on $M$. Also, at any point of $M$, we have either
		$$
		\lambda_1 = \ldots = \lambda_m = f,
		$$
		or
		$$
		\lambda_1 \neq \lambda_2 = \ldots = \lambda_m.
		$$
		and either
		$$
		m \overline R = |A|^2,
		$$
		or
		\begin{equation} \label{eq:equalityInTheSecondPartOfDoubleInequalityScalarCurvature}
			|A|^2 = \frac m {(m - 2) (m \overline R + 2c)} \left ( mc^2 + 4 (m - 1) \overline R + m (m - 1) \overline R^2 \right ).
		\end{equation}
		Thus, on any connected component $U$ of $M_A$, we either have one distinct principal curvature, or two distinct principal curvatures.
		
		If on $U$ we have one distinct principal curvature, then $\grad f = 0$, on $U$.
		
		If on $U$ we have two distinct principal curvatures, then \eqref{eq:equalityInTheSecondPartOfDoubleInequalityScalarCurvature} must hold. Assume by way of contradiction that $\grad f \neq 0$ at any point of an open neighbourhood $V$. Since $M$ is biconservative, on $V$ there exists a local orthonormal frame field $\{ E_i \}_{i \in \overline {1, m}}$ such that $A (E_i) = \lambda_i E_i$, $\lambda_1 = - mf / 2$ and $E_1 = \grad f / |\grad f|$. Using the fact that $\trace A = mf$, we obtain that
		$$
		\lambda_2 = \frac {3m} {2(m - 1)} f.
		$$
		Computing $|A|^2$ and using \eqref{eq:formulaNormalizedScalarCurvature}, we obtain that 
		\begin{equation*}
			|A|^2 = \frac {m^2 (m + 8)} {4 (m - 1)} f^2 = \frac {m + 8} {4 (m - 1)} |A|^2 + \frac {m (m + 8)} 4 \overline R,
		\end{equation*}
		which is equivalent to 
		\begin{equation} \label{eq:SecondFormulaNormA}
			\frac {3 (m - 4)} {m - 1} |A|^2 = m (m + 8) \overline R.
		\end{equation}
		Combining \eqref{eq:equalityInTheSecondPartOfDoubleInequalityScalarCurvature} and \eqref{eq:SecondFormulaNormA}, we obtain a non-trivial polynomial equation in the variable $\overline R$ with constant coefficients, which implies that $\overline R$ is constant on $V$. From \eqref{eq:equalityInTheSecondPartOfDoubleInequalityScalarCurvature} we deduce that $|A|^2$ is constant and together with \eqref{eq:formulaNormalizedScalarCurvature}, we conclude that $f$ is constant on $V$, i.e. $\grad f = 0$ on $V$, contradiction.
		
		Therefore, $\grad f = 0$ on $M_A$, and thus on $M$.
		
		Now, applying the same steps as in the proof of Theorem \ref{th:ClassificationHypersurfacesInequalityMeanCurvature}, the conclusion follows.
	\end{proof}
	
	\begin{remark}
		When $\varphi (M) = \mathbb S^1 (r_1) \times \mathbb S^{m-1} (r_2)$, for any $r_1 \in \left ( 1 / \sqrt m, \sqrt {2 / m} \right )$ we have $\overline R < 0$.
	\end{remark}
	
	\section{Biconservative hypersurfaces with constant normalized scalar curvature}
	
	The original proof of Theorem \ref{th:FetcuLoubeauOniciucConstantScalarCurvature} is based on a Simons type formula obtained by computing $(1 / 2) \Delta |S_2|^2$. Here we provide a simpler proof involving Cheng-Yau operator.
	
	\subsection{Alternative proof of Theorem \ref{th:FetcuLoubeauOniciucConstantScalarCurvature}}
	
	Since $M$ has constant normalized scalar curvature, from \eqref{eq:formulaScalarCurvature}, we obtain $\Delta |A|^2 = m^2 \Delta f^2$. Further, by straightforward computations, we deduce that $f^2 \Delta f^2 = (1 / 2) \Delta f^4 + 4 f^2 |\grad f|^2$, which implies that 
	\begin{equation*}
		\frac 1 2 f^2 \Delta |A|^2 = \frac {m^2} 2 \left ( \frac 1 2 \Delta f^4 + 4 f^2 |\grad f|^2 \right ).
	\end{equation*}
	Since $M$ is a compact biconservative hypersurface, combining the above formula with \eqref{eq:formulaChengYauOperatorf2A}, we obtain
	\begin{equation*}
		0 = \int_M  f^2 \left ( 2m^2 |\grad f|^2 + |\nabla A|^2 + \frac {1} 2 \sum _{i, j = 1} ^m (\lambda_i - \lambda_j)^2 R_{ijij} \right ) \ v_g.
	\end{equation*}
	Since $\Riem ^M \geq 0$, we obtain that, on $M$,
	\begin{equation*}
		f^2 |\grad f|^2 = 0 \quad \text{and} \quad f^2 |\nabla A|^2 = 0.
	\end{equation*} 
	
	If $\grad f = 0$ on $M$, then $f$ is constant and non-zero, thus $\nabla A = 0$ on $M$.
	
	Assume by way of contradiction that $\grad f \neq 0$ at a point $p \in M$, thus on an open neighbourhood $U$ of $p$. Then $f = 0$ on $U$, which implies that $\grad f = 0$ on $U$, contradiction.
	
	In conclusion, $\grad f = 0$ on $M$, i.e. $M$ is CMC, and thus $\nabla A = 0$. \hfill \qed
	
	\subsection{Generalization of Theorem \ref{th:FetcuLoubeauOniciucConstantScalarCurvature}}
	
	As in the previous section, we replace the hypothesis of $\Riem ^M \geq 0$ with some estimates of $|A|^2$ and obtain
	
	\begin{theorem} \label{th:ClassificationHypersurfacesInequalityScalarCurvature2}
		Let $M^m$ be a compact non-minimal biconservative hypersurface in $N^{m+1} (c)$ with constant normalized scalar curvature, $m \geq 3$, $m \overline R + 2c > 0$, 
		\begin{equation*}
			\overline R \geq \frac {m-2} {2m(m-1)} |A|^2 - \frac m {2(m-1)} c
		\end{equation*}
		and 
		\begin{equation} \label{eq:hypothesisScalarCurvature2}
			m \overline R \leq |A|^2 \leq \frac m {(m - 2) (m \overline R + 2c)} \left ( mc^2 + 4(m - 1) c \overline R + m (m - 1) \overline R^2 \right ),
		\end{equation}
		Then $\nabla A = 0$. Moreover, either $|A|^2 = m \overline R$ on $M$, i.e. $M$ is umbilical, or we have equality in the second inequality of \eqref{eq:hypothesisScalarCurvature2}. More precisely, we have
		\begin{enumerate}[label = \alph*)]
			\item if $c = -1$, then $\varphi (M)$ is a hypersphere of $\mathbb S^m (r)$ of radius $r > 0$, 
			\item if $c = 0$, then $\varphi (M)$ is a hypersphere $\mathbb S^m (r)$
			\item if $c = 1$, then $\varphi (M)$ is either a small hypersphere $\mathbb S^m (r)$ of $\mathbb S^{m+1}$, or the standard product $\mathbb S^1 (r_1) \times \mathbb S^{m-1} (r_2)$, where $r_1^2 + r_2^2 = 1$ and $r_1 > 1 / {\sqrt m}$.			
		\end{enumerate}
	\end{theorem}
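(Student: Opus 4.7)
The plan is to combine the Cheng--Yau operator technique from the alternative proof of Theorem~\ref{th:FetcuLoubeauOniciucConstantScalarCurvature} with the Okumura--type pointwise estimate of $\sum(\lambda_i-\lambda_j)^2 R_{ijij}$ developed in the proof of Theorem~\ref{th:ClassificationHypersurfacesInequalityMeanCurvature}. I would start from the Cheng--Yau identity \eqref{eq:formulaChengYauOperatorf2A} and use that $\overline R$ is constant --- so that \eqref{eq:formulaNormalizedScalarCurvature} gives $\Delta|A|^2 = m^2 \Delta f^2$ --- together with the identity $f^2 \Delta f^2 = \tfrac{1}{2}\Delta f^4 + 4 f^2 |\grad f|^2$ already used for Theorem~\ref{th:FetcuLoubeauOniciucConstantScalarCurvature}, to reduce \eqref{eq:formulaChengYauOperatorf2A} to
\begin{equation*}
    0 = \int_M f^2\left( 2m^2 |\grad f|^2 + |\nabla A|^2 + \tfrac{1}{2}\sum_{i,j=1}^m (\lambda_i-\lambda_j)^2 R_{ijij} \right) v_g.
\end{equation*}
This is exactly the integral identity appearing in the alternative proof of Theorem~\ref{th:FetcuLoubeauOniciucConstantScalarCurvature}; the difference is that $\Riem^M \geq 0$ is no longer available to sign the curvature sum.

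To replace that positivity hypothesis, I would import the pointwise algebraic estimate \eqref{eq:inequalitySumWithRijij}--\eqref{eq:SecondParanthesisWithSquare} from the proof of Theorem~\ref{th:ClassificationHypersurfacesInequalityMeanCurvature}, and then rewrite $f^2$ in terms of $\overline R$ via \eqref{eq:formulaNormalizedScalarCurvature}, exactly as done in the opening lines of the proof of Theorem~\ref{th:ClassificationHypersurfacesInequalityScalarCurvature}. The three conditions $m\overline R + 2c > 0$, $\overline R \geq \tfrac{m-2}{2m(m-1)}|A|^2 - \tfrac{m}{2(m-1)}c$ and the upper bound in \eqref{eq:hypothesisScalarCurvature2} are precisely what is needed to force the bracketed factor in \eqref{eq:inequalitySumWithRijij} to be non-positive; together with $|A|^2 \geq mf^2$ (automatic from Cauchy--Schwarz), this yields $\tfrac{1}{2}\sum(\lambda_i-\lambda_j)^2 R_{ijij} \geq 0$ pointwise on $M$.

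Each term in the integrand above is therefore non-negative and each must vanish on $M$; in particular $f^2|\grad f|^2 \equiv 0$ and $f^2|\nabla A|^2 \equiv 0$. The open-set contradiction used at the end of the alternative proof of Theorem~\ref{th:FetcuLoubeauOniciucConstantScalarCurvature} then gives $\grad f \equiv 0$, so $f$ is a nonzero constant by non-minimality and $\nabla A = 0$ follows. For the explicit classification, tracing back the equality cases forces equality in Okumura's inequality \eqref{eq:OkumuraInequalityMu} and in the right-hand inequality of \eqref{eq:hypothesisScalarCurvature2}, so $M$ has at most two distinct principal curvatures with $\lambda_1 = -mf/2$ of multiplicity $1$; the subsequent case analysis for $c \in \{-1, 0, 1\}$ is then verbatim that of Theorem~\ref{th:ClassificationHypersurfacesInequalityScalarCurvature}. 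The step I expect to require the most care is checking that, after the substitution \eqref{eq:formulaNormalizedScalarCurvature}, the three pointwise hypotheses combine correctly to sign the bracketed factor; but this verification is already essentially carried out inside the proof of Theorem~\ref{th:ClassificationHypersurfacesInequalityScalarCurvature}, so no dimensional restriction of the type $m \leq 10$ is needed here.
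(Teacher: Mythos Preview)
Your proposal is correct and is exactly the argument the paper intends: the paper does not spell out a proof for this theorem, but it is placed immediately after the Cheng--Yau alternative proof of Theorem~\ref{th:FetcuLoubeauOniciucConstantScalarCurvature} and presented as the result of ``replacing the hypothesis $\Riem^M\geq 0$ with some estimates of $|A|^2$'', i.e.\ precisely the combination you describe of the integral identity \eqref{eq:formulaChengYauOperatorf2A} (rewritten using $\Delta|A|^2=m^2\Delta f^2$ and $f^2\Delta f^2=\tfrac12\Delta f^4+4f^2|\grad f|^2$) with the Okumura-based pointwise bound \eqref{eq:inequalitySumWithRijij}--\eqref{eq:SecondParanthesisWithSquare} recast in terms of $\overline R$ as in the proof of Theorem~\ref{th:ClassificationHypersurfacesInequalityScalarCurvature}. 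One small inaccuracy: once you have shown $\grad f\equiv 0$, the biconservativity relation \eqref{eq:BiconservativityCharacterization} becomes vacuous, so you cannot deduce ``$\lambda_1=-mf/2$ of multiplicity~1'' from it; however this is harmless, since equality in Okumura already gives at most two distinct principal curvatures with multiplicities $1$ and $m-1$, and the ensuing case analysis in the proof of Theorem~\ref{th:ClassificationHypersurfacesInequalityMeanCurvature} computes the principal curvatures of $\mathbb S^1(r_1)\times\mathbb S^{m-1}(r_2)$ directly without using that relation.
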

	
	\begin{remark}
		We mention that, for the sake of completeness, we kept in Theorem \ref{th:ClassificationHypersurfacesInequalityScalarCurvature2} the case $c = 0$. In fact, when $c = 0$, the result follows without assuming the biconservativity of the hypersurface, but only the constancy of the normalized scalar curvature and \eqref{eq:hypothesisScalarCurvature2} (see Theorem 3 in \cite{Li1996}). However, with the additional hypothesis of biconservativity the proof of the case $c = 0$ is much shorter and simpler the original one.
	\end{remark}
	
	\textbf{Acknowledgements} We are grateful to Yu Fu, Eric Loubeau and Cezar Oniciuc for their valuable suggestions and discussions.

	\bibliographystyle{abbrv}
	\bibliography{Bibliography.bib}
\end{document}